
\documentclass[11pt]{amsart}
\usepackage{amssymb}
\usepackage{mathrsfs}
\usepackage[arrow,matrix,curve,cmtip,ps]{xy}

\usepackage{tikz}
\usepackage{xcolor}

\allowdisplaybreaks

\usepackage{fixme}

\newtheorem{theorem}{Theorem}[section]
\newtheorem{lemma}[theorem]{Lemma}
\newtheorem{proposition}[theorem]{Proposition}
\newtheorem{corollary}[theorem]{Corollary}
\newtheorem*{theorem*}{Theorem}
\theoremstyle{remark}
\newtheorem{remark}[theorem]{Remark}
\newtheorem{definition}[theorem]{Definition}
\newtheorem{example}[theorem]{Example}

\newtheorem{assumption}[theorem]{Assumption}


\numberwithin{equation}{section}


\newcommand{\N}{\mathbb{N}}

\newcommand{\im}{\operatorname{im }}

\newcommand{\gae}{\lower 2pt \hbox{$\, \buildrel {\scriptstyle >}\over {\scriptstyle
\sim}\,$}}

\newcommand{\lae}{\lower 2pt \hbox{$\, \buildrel {\scriptstyle <}\over {\scriptstyle
\sim}\,$}}

\newcommand{\MU}[1]{
\setbox0\hbox{$#1$}
\setbox1\hbox{$W$}
\ifdim\wd0>\wd1 #1^{\sim} \else \widetilde{#1} \fi
}


\title{Corners of Cuntz-Krieger algebras}

\author{Sara E.~Arklint}
\address{Department of Mathematical Sciences, University of Copenhagen, Uni\-versi\-tets\-parken~5, DK-2100 Copenhagen, Denmark }
\email{arklint@math.ku.dk}
 
\author{Efren Ruiz}
\address{Department of Mathematics, University of Hawaii, Hilo, 200 W.~Kawili St., Hilo, Hawaii, 96720-4091 USA}
\email{ruize@hawaii.edu}

\date{\today}

\keywords{Cuntz-Krieger algebras, graph $C^*$-algebras}
\subjclass[2010]{Primary: 46L05}

\begin{document}

\begin{abstract}
We show that 
if $A$ is a unital $C^{*}$-algebra and $B$ is a Cuntz-Krieger algebra 
for which $A \otimes \mathbb{K} \cong B \otimes \mathbb{K}$, then $A$ is a Cuntz-Krieger algebra. 
Consequently, corners of Cuntz-Krieger algebras are Cuntz-Krieger algebras.
\end{abstract}

\maketitle

\section{Introduction}

The Cuntz-Krieger algebras were introduced by J.~Cuntz and W.~Krieger in 1980, \cite{cuntzkrieger}, as $C^{*}$-algebras arising from dynamical systems.
This class of $C^{*}$-algebras has since shown up in several contexts, including the classification program as the Cuntz-Krieger algebras with finitely many ideals, are examples of non-simple purely infinite $C^{*}$-algebras.
It has been known since M.~Enomoto and Y.~Watatani introduced graph algebras in 1980 in \cite{ew},
 that Cuntz-Krieger algebras are the graph algebras arising from finite graphs with no sinks and no sources (see also \cite{mrs}), but no characterization in terms of outer properties has been established for the Cuntz-Krieger algebras.

We show in Theorem~\ref{t:phantom-ck-algebras} that 
the Cuntz-Krieger algebras are the graph algebras arising from finite graphs with no sinks, and conclude that
a graph algebra is a Cuntz-Krieger algebra if and only if it is unital and the rank of its $K_{0}$-group equals the rank of its $K_{1}$-group.
Using this, we show in~Theorem~\ref{t:morita-ck-algebras}, that if a unital $C^{*}$-algebra is stably isomorphic to a Cuntz-Krieger algebra, then it is isomorphic to a Cuntz-Krieger algebra.

As a corollary to Theorem~\ref{t:morita-ck-algebras}, we see that corners of Cuntz-Krieger algebras are Cuntz-Krieger algebras, see Corollary~\ref{cor}.  It is quite surprising that the class of Cuntz-Krieger algebras has this permanence property since the larger class of graph algebras do not (as the graph algebra $\mathsf M_{2^{\infty}}\otimes\mathbb K$ provides a counterexample).  
Moreover, this shows that corners of Cuntz-Krieger algebras are semiprojective, Corollary~\ref{c:sem}, as Cuntz-Krieger algebras are semiprojective.  Our results also show that a unital corner of a stabilized Cuntz-Krieger algebra is semiprojective since a stabilized Cuntz-Krieger algebra is semiprojective.  It was conjectured by B.~Blackadar in \cite[Conjecture~4.4]{blackadar-semiprojective} that a full corner of a semiprojective $C^{*}$-algebra is semiprojective.  He showed in \cite[Proposition~2.7]{blackadar-semiprojective} that a full unital corner of a semiprojective $C^{*}$-algebra is semiprojective.  Recently, S.~Eilers and T.~Katsura showed in~\cite{ek_graph}  that a corner of a unital graph $C^{*}$-algebra that is semiprojective is also semiprojective.
Corollary~\ref{c:sem} is a special case of their results since every Cuntz-Krieger algebra is isomorphic to a unital semiprojective graph $C^{*}$-algebra.  Semiprojectivity is easy in our case since the graphs are finite.  Thus we do not need any results from  \cite{ek_graph}.

\section{Definitions and preliminaries}

\begin{definition} \label{def:graph}
Let $E = (E^0,E^1,s_{E},r_{E})$ be a countable directed graph. A Cuntz-Krieger $E$-family is a set of mutually orthogonal projections $\{ p_v \mid v \in E^0 \}$ and a set $\{ s_e \mid e \in E^1 \}$ of partial isometries satisfying the following conditions:
\begin{itemize}
	\item[(CK0)] $s_e^* s_f = 0$ if $e,f \in E^1$ and $e \neq f$,
	\item[(CK1)] $s_e^* s_e = p_{r_{E}(e)}$ for all $e \in E^1$,
	\item[(CK2)] $s_e s_e^* \leq p_{s_{E}(e)}$ for all $e \in E^1$, and,
	\item[(CK3)] $p_v = \sum_{e \in s_{E}^{-1}(v)} s_e s_e^*$ for all $v \in E^0$ with $0 < |s_{E}^{-1}(v)| < \infty$.
\end{itemize}
The \emph{graph algebra} $C^*(E)$ is defined as the universal $C^*$-algebra given by these generators and relations.
\end{definition}

\begin{definition}
Let $E$ be a directed graph, and let $v\in E^0$ be a vertex in $E$.
The vertex $v$ is called \emph{regular} if $s_E^{-1}(v)$ is finite and nonempty.
If $s_E^{-1}(v)$ is empty, $v$ is called a \emph{sink}, and if $r_E^{-1}(v)$ is empty, $v$ is called a \emph{source}.
If $s_E^{-1}(v)$ is infinite, $v$ is called an \emph{infinite emitter}.
\end{definition}

\begin{definition}\label{d:ckalgebras}
A graph $C^{*}$-algebra of a finite graph with no sinks and no sources is called a \emph{Cuntz-Krieger algebra}.
\end{definition}

J.~Cuntz and W.~Krieger originally defined a Cuntz-Krieger algebra as the universal $C^*$-algebra determined by a collection of partial isometries satisfying relations determined by a finite matrix with entries in $\{0, 1 \}$.  It follows from \cite[Section~4]{KPRR97} that the class of Cuntz-Krieger algebras coincides with the class of graph $C^*$-algebras of finite graphs with no sinks or sources, and moreover, if $E$ is a finite graph with no sinks or sources, $C^*(E)$ coincides with the Cuntz-Krieger algebra associated with the edge matrix of $E$.  

In their study of Cuntz-Krieger algebras, Cuntz and Krieger often imposed Condition (I) on their matrices, which is equivalent to imposing Condition (L) on the graph.  In work after Cuntz and Krieger, particularly in \cite{aHR97}, it was shown that Condition (I) was not necessary, and that loops without exits would produce ideals in the associated $C^{*}$-algebra that are Morita equivalent to $C( \mathbb{T})$.  In this paper, we do not assume our matrices satisfy Condition (I).  Thus obtaining results for $C^{*}$-algebras without real rank zero and results where the $C^{*}$-algebras have uncountably many ideals or are commutative.  We use the language of graph $C^{*}$-algebras in order to provide us with nice combinatorial models of Cuntz-Krieger algebras.  Thus, motivating us to define a Cuntz-Krieger algebra as in Definition~\ref{d:ckalgebras}.

\begin{definition}\label{d:hereditary}
Let $E$ be a directed graph.  A path $\alpha=e_1e_2\cdots e_n$ in $E$ with $r_E(\alpha) := r( e_{n} ) = s_{E} ( e_{1} ) =: s_E(\alpha)$ is called a \emph{cycle}.
A cycle $\alpha=e_1e_2\cdots e_n$ is called \emph{vertex-simple} if $s_{E} ( e_{i} ) \neq s_{E} ( e_{j} )$ for all $i \neq j$.

We refer to $s_E(\alpha)$ as the \emph{base point} of the cycle $\alpha$.
In particular, an edge $e$ in $E$ with $s_E(e)=r_E(e)$ is called a \emph{cycle of length one with base point $s_E(e)$}.
\end{definition}
%

\begin{definition}
Let $E$ be a directed graph.  For vertices $v,w$ in $E$, we write $v\geq w$ if there is a path in $E$ from $v$ to $w$, i.e., a path $\alpha$ in $E$ with $s_E(\alpha)=v$ and $r_E(\alpha)=w$.  Let $S$ be a subset of $E^{0}$.  We write $v \geq S$ if there exists $u \in S$ such that $v \geq u$. 

Let $H$ be a subset of $E^{0}$.  The subset $H$ is called \emph{hereditary} if for all $v\in H$ and $w\in E^0$, $v\geq w$ implies $w\in H$ and $H$ is called \emph{saturated} if $r_E(s_E^{-1}(v))\subseteq H$ implies $v\in H$ for all regular vertices $v$ in $E$.  

For a hereditary subset $H$ in $E^0$, we let $I_H$ denote the ideal in $C^*(E)$ generated by $\{ p_{v} \mid v \in H \}$.
\end{definition}
%

\begin{definition}
Let $E$ be a countable directed graph.  Let $\gamma$ denote the gauge action on $C^*(E)$, i.e., the action $\gamma$ of the circle group $\mathbb T$ on $C^*(E)$ for which $\gamma_z(s_e)=zs_e$ and $\gamma_z(p_v)=p_v$ for all $z\in\mathbb T$, $e\in E^1$, and $v\in E^0$.
An ideal $I$ in $C^*(E)$ is called \emph{gauge invariant} if $\gamma_z(I)\subseteq I$ for all $z\in\mathbb T$.
\end{definition}
%

When $E$ is a row-finite graph, the map $H\mapsto I_H$ defines a lattice isomorphism between the saturated hereditary subsets in $E^0$ and the gauge invariant ideals in $C^*(E)$, see~\cite[Theorem~4.1]{BPRS00}.
%

\begin{definition}
Let $E$ and $F$ be directed graphs.  A \emph{graph homomorphism} $f\colon E\to F$ consists of two maps $f^0\colon E^0\to F^0$ and $f^1\colon E^1\to F^1$ satisfying $r_F \circ f^1=f^0 \circ r_E$ and $s_F \circ f^1=f^0 \circ s_E$.  A graph homomorphim $f\colon E\to F$ is called a \emph{CK-morphism} if $f^0$ and $f^1$ are injective and $f^1$ restricts to a bijection from $s_E^{-1}(v)$ onto $s_F^{-1}(f^0(v))$ for all regular vertices $v$ in $E$.

If $E$ is a subgraph of $F$, we call it a \emph{CK-subgraph} if the inclusion $E\to F$ is a CK-morphism.
\end{definition}

The definition of a CK-morphism between arbitrary graphs was introduced by K.~R.~Goodearl in \cite{goodearl}.  Let \textbf{CKGr} be the category whose objects are arbitrary directed graphs and whose morphisms are CK-morphisms.  Goodearl showed that there is a functor $L_{K}$ from the category \textbf{CKGr}  to the category of algebras over a field $K$.  The functor $L_{K}$ assigns an object $E$ the Leavitt path algebra $L_{K} (E)$.  Goodearl also proved in \cite[Corollary~3.3]{goodearl} that for every CK-morphism $\phi$, the $K$-algebra homomorphism $L_{K} ( \phi )$ is injective.  We now prove the analog of \cite[Corollary~3.3]{goodearl} where the category of $K$-algebras is replaced by the category of $C^{*}$-algebras and the functor assigns an object $E$ the graph $C^{*}$-algebra $C^{*} (E)$.


\begin{lemma} \label{l:ck-subgraph}
Let $E$ and $F$ be countable directed graphs, let $f\colon E\to F$ be a CK-morphism.  Let $\{p_v, s_e \mid v\in E^0, e\in E^1 \}$ be a universal Cuntz-Krieger $E$-family generating $C^*(E)$, and let $\{q_v, t_e \mid v\in F^0, e\in F^1 \}$ be a universal Cuntz-Krieger $F$-family generating $C^*(F)$.

Then the assignments, $p_v\mapsto q_{f^0(v)}$ and $s_e\mapsto t_{f^1(e)}$, induce an injective $*$-homomorphism $\phi\colon C^*(E)\to C^*(F)$ with image equal to the subalgebra of $C^{*} (F)$ generated by $\{ q_v, t_e \mid v,s_F(e)\in f^0(E^0) \}$.
\end{lemma}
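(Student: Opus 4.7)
The plan is to first verify that $\{q_{f^{0}(v)},\, t_{f^{1}(e)} \mid v\in E^{0},\, e\in E^{1}\}$ forms a Cuntz-Krieger $E$-family inside $C^{*}(F)$, then invoke the universal property of $C^{*}(E)$ to obtain $\phi$, establish injectivity via the gauge-invariant uniqueness theorem, and finally identify the image by comparing generators.

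For the Cuntz-Krieger family check, injectivity of $f^{0}$ makes the projections $q_{f^{0}(v)}$ mutually orthogonal, and injectivity of $f^{1}$ together with (CK0) in $C^{*}(F)$ yields (CK0). The compatibility identities $r_{F}\circ f^{1}=f^{0}\circ r_{E}$ and $s_{F}\circ f^{1}=f^{0}\circ s_{E}$ immediately give (CK1) and (CK2). Relation (CK3) is the substantive point and is where the bijection condition in the definition of a CK-morphism enters: for a regular $v\in E^{0}$, the set $s_{F}^{-1}(f^{0}(v))$ is finite and nonempty, so $f^{0}(v)$ is regular in $F$, and (CK3) in $F$ combined with the bijection re-indexing gives
\[
q_{f^{0}(v)}=\sum_{e'\in s_{F}^{-1}(f^{0}(v))}t_{e'}t_{e'}^{*}=\sum_{e\in s_{E}^{-1}(v)}t_{f^{1}(e)}t_{f^{1}(e)}^{*},
\]
which is exactly (CK3) for the candidate $E$-family.

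Injectivity will then follow from the gauge-invariant uniqueness theorem. The gauge action $\gamma^{F}$ of $\mathbb{T}$ on $C^{*}(F)$ fixes each $q_{f^{0}(v)}$ and scales each $t_{f^{1}(e)}$ by $z$, so $\phi$ is equivariant for the gauge actions on $C^{*}(E)$ and $C^{*}(F)$; since each $\phi(p_{v})=q_{f^{0}(v)}$ is a nonzero vertex projection in a universal Cuntz-Krieger family, the theorem applies.

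For the image, the $*$-subalgebra generated by $\{q_{f^{0}(v)}, t_{f^{1}(e)}\}$ is contained in the $*$-subalgebra generated by $\{q_{v}, t_{e}\mid v, s_{F}(e)\in f^{0}(E^{0})\}$ because $s_{F}(f^{1}(e))=f^{0}(s_{E}(e))\in f^{0}(E^{0})$. The reverse inclusion once more uses the bijection condition: every $e\in F^{1}$ with $s_{F}(e)=f^{0}(v_{0})\in f^{0}(E^0)$ arising from a (regular) $v_{0}\in E^{0}$ lies in $f^{1}(s_{E}^{-1}(v_{0}))$, placing $t_{e}$ in the image. The main obstacle is the careful invocation of the gauge-invariant uniqueness theorem for a possibly non-row-finite graph; the remaining steps are essentially direct transcriptions of the definitions.
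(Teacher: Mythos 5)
Your proposal is correct and follows essentially the same route as the paper's proof: verify that $\{q_{f^0(v)},t_{f^1(e)}\}$ is a Cuntz-Krieger $E$-family in $C^*(F)$, obtain $\phi$ from the universal property, deduce injectivity from the gauge-invariant uniqueness theorem via equivariance and nonvanishing of the vertex projections, and identify the image by noting $f^1(E^1)=\{e\in F^1\mid s_F(e)\in f^0(E^0)\}$. The paper leaves the family verification and the image computation as assertions; you supply the details, and they are correct.
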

\begin{proof}
Using the fact that $f$ is a CK-morphism, one can verify that $\{q_{f^0(v)}, t_{f^1(e)} \mid v\in E^0, e\in E^1 \}$ is a Cuntz-Krieger $E$-family in $C^*(F)$.  The universal property of $C^{*} (E)$ now implies that the $*$-homomorphism $\phi$ exists.  Since $\phi$ intertwines the canonical gauge actions on $C^{*} (E)$ and $C^{*} (F)$ and since $\phi ( p_{v} ) = q_{ f^{0}(v) } \neq 0$ for all $v \in E^{0}$, the gauge invariant uniqueness theorem implies that $\phi$ is injective.  Since $f$ is a CK-morphism, the sets $f^1(E^1)$ and $\{e\in F^1 \mid s_F(e)\in f^0(E^0)\}$ coincide.  It now follows that $\phi ( C^{*} (E) )$ is equal to the subalgebra of $C^{*} (F)$ generated by $\{ q_v, t_e \mid v,s_F(e)\in f^0(E^0) \}$. 
\end{proof}

\section{Graph $C^{*}$-algebras over finite graphs with no sinks}

\begin{assumption}
Throughout the rest of the paper, unless stated otherwise, all graphs will be countable and directed.
\end{assumption}

\begin{definition}
Let $E$ be a graph, let $v_0 \in E^0$ be a vertex, and let $n$ be a positive integer.  Define a graph $E(v_0,n)$ as follows:
\begin{align*}
E(v_0,n)^{0} &= E^{0} \cup \{ v_{1} ,v_{2} , \dots, v_{n} \} \\
E(v_0,n)^1 &= E^{1} \cup \{ e_{1} , e_{2}, \dots, e_{n} \}
\end{align*}
where $r_{E(v_0,n)}$ and $s_{E(v_0,n)}$ extends $r_{E}$ and $s_{E}$ respectively and $r_{E(v_0,n)} ( e_{i} ) = v_{i-1}$ and $s_{E(v_0,n)} ( e_{i} ) = v_{i}$.  
\end{definition}

\begin{definition}
Let $E$ be a graph, let $e_0\in E^1$ be an edge, and let $n$ be a positive integer.  Define a graph $E(e_0,n)$ as follows:
\begin{align*}
E(e_0,n)^{0} &= E^{0} \cup \{ v_{1} , v_{2} , \dots, v_{n} \} \\
E(e_0,n)^{1} &= \left( E^{1} \setminus \{ e_{0} \} \right) \cup \{ e_{1} , e_{2} , \dots, e_{n+1} \} 
\end{align*}
where $r_{E(e_0,n)}$ and $s_{E(e_0,n)}$ extends $r_{E}$ and $s_{E}$ respectively, $r_{E(e_0,n)} ( e_{i} ) = v_{i-1}$ for $i=2,\dots, n+1$ and $s_{E(e_0,n)} ( e_{i} ) = v_{i}$ for $i = 1, \dots, n$, and $r_{E(e_0,n)} ( e_{1} ) = r_E(e_0)$ and $s_{E(e_0,n)} ( e_{n+1} ) = s_{E} ( e_{0} )$.  
\end{definition}

\begin{example} \label{example}
Let $E$ be the graph
\begin{align*}
\xymatrix{
v_{0} \ar@(ul, ur)[]^{e_{0}} \ar@(ur,dr)[]^{f}
}.
\end{align*} 
Then $E(v_0,n)$ is the graph 
\begin{align*}
\xymatrix{
v_{n} \ar[r]^{e_{n}} \ar[r] & v_{n-1} \ar[r]^{ e_{n-1}} \ar[r] & \dots \ar[r]^{e_{2}} & v_{1}\ar[r]^{ e_{1} } & v_{0}  \ar@(ul, ur)[]^{e_{0}} \ar@(ur,dr)[]^{f}
}
\end{align*}
and $E(e_0,n)$ is the graph
\begin{align*}
\xymatrix{
					& \dots  \ar[rd]^{ e_{3} } 		& \\
v_{n-1} \ar[ru]^{ e_{n-1} } &           			& v_{2} \ar[d]^{ e_{2} } \\
v_{n}	 \ar[u]^{e_{n}}		&					&  v_{1}  \ar[ld]^{ e_{1} } 	\\
					& v_{0}  \ar[ul]^{ e_{n+1} } \ar@(dl,dr)[]_{f}
}
\end{align*}
\end{example}

\begin{proposition}\label{p:removing-sources}
Let $E$ be a graph, let $e_0\in E^1$ be an edge, and let $n$ be a positive integer.  Define $v_{0} = r_{E} ( e_{0} )$.  Then $C^{*} (E(v_0,n)) \cong C^{*} (E(e_0,n))$.
\end{proposition}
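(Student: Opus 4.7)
The plan is to construct mutually inverse $*$-homomorphisms
\[
\Phi\colon C^{*}(E(e_0,n)) \to C^{*}(E(v_0,n)), \qquad \Psi\colon C^{*}(E(v_0,n)) \to C^{*}(E(e_0,n))
\]
directly from the universal property of graph algebras, bypassing any uniqueness theorem. Let $\{P_v, S_e\}$ denote a universal Cuntz-Krieger family for $C^{*}(E(v_0,n))$ and $\{Q_v, T_e\}$ for $C^{*}(E(e_0,n))$. Inside $C^{*}(E(v_0,n))$ I will exhibit a Cuntz-Krieger $E(e_0,n)$-family by setting $q_v := P_v$ for every vertex, $t_e := S_e$ for every edge common to the two graphs, and
\[
t_{e_{n+1}} \;:=\; S_{e_0}\, S_{e_1}^{*}\, S_{e_2}^{*} \cdots S_{e_n}^{*}.
\]
Symmetrically, inside $C^{*}(E(e_0,n))$ I will exhibit a Cuntz-Krieger $E(v_0,n)$-family by $p_v := Q_v$, $s_e := T_e$ on the common edges, and
\[
s_{e_0} \;:=\; T_{e_{n+1}}\, T_{e_n} \cdots T_{e_1}.
\]

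The verification of the Cuntz-Krieger relations will hinge on the observation that each intermediate vertex $v_i$ with $1 \leq i \leq n$ is regular in both graphs and emits only the single edge $e_i$, so relation (CK3) forces $P_{v_i} = S_{e_i} S_{e_i}^{*}$ and $Q_{v_i} = T_{e_i} T_{e_i}^{*}$. Telescoping these identities from the inside out will yield
\[
t_{e_{n+1}}^{*}\, t_{e_{n+1}} \;=\; P_{v_n}, \qquad t_{e_{n+1}}\, t_{e_{n+1}}^{*} \;=\; S_{e_0} S_{e_0}^{*} \;\leq\; P_{s_{E}(e_0)},
\]
which delivers (CK1) and (CK2) for the new edge. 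The (CK0) relations involving $t_{e_{n+1}}$ reduce to $S_{e_0}^{*} S_f = 0$ for $f \neq e_0$; the (CK3) relations at the intermediate vertices are immediate; and (CK3) at the vertex $s_{E}(e_0)$ follows by combining the identity $t_{e_{n+1}} t_{e_{n+1}}^{*} = S_{e_0} S_{e_0}^{*}$ with (CK3) in $C^{*}(E(v_0,n))$ to recover $P_{s_{E}(e_0)}$. The verifications for the second family are strictly symmetric.

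By the universal property, the two families then produce the $*$-homomorphisms $\Phi$ and $\Psi$, and to conclude I will check that their compositions act as the identity on the universal generators. The only substantive step is
\[
\Psi(\Phi(t_{e_{n+1}})) \;=\; (T_{e_{n+1}} T_{e_n} \cdots T_{e_1})(T_{e_1}^{*} \cdots T_{e_n}^{*}),
\]
which telescopes via $Q_{v_i} = T_{e_i} T_{e_i}^{*}$ down to $T_{e_{n+1}} Q_{v_n} = T_{e_{n+1}}$; the mirror identity $\Phi(\Psi(s_{e_0})) = S_{e_0}$ is handled identically, telescoping via $P_{v_i} = S_{e_i} S_{e_i}^{*}$ down to $S_{e_0} P_{v_0} = S_{e_0}$. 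Uniqueness in the universal property then forces $\Psi \circ \Phi = \mathrm{id}$ and $\Phi \circ \Psi = \mathrm{id}$ on the whole algebras, giving the desired isomorphism. I expect no substantial obstacle beyond the bookkeeping of the telescoping collapses; the most delicate single verification will be (CK3) at the vertex $s_{E}(e_0)$ in $E(e_0,n)$, which is where the telescoped identity $t_{e_{n+1}} t_{e_{n+1}}^{*} = S_{e_0} S_{e_0}^{*}$ is doing the essential work.
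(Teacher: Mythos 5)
Your proof is correct, and it differs from the paper's in one substantive respect: how injectivity is obtained. The paper constructs only the map $\Psi$ (from the Cuntz--Krieger $E(v_0,n)$-family with $s_{e_0}\mapsto T_{e_{n+1}}T_{e_n}\cdots T_{e_1}$), proves surjectivity by exhibiting $T_{e_{n+1}}$ in the image via exactly the telescoped element $S_{e_0}S_{e_1}^*\cdots S_{e_n}^*$ that you use to define $\Phi$, and then proves injectivity by invoking the generalized Cuntz--Krieger uniqueness theorem of \cite{ws-general-ck-uniquness}, which requires the extra verification that every vertex-simple cycle without exits is sent to a unitary with full spectrum (the paper splits this into the cases where the cycle does or does not pass through the base point of $e_0$). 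You instead promote that same telescoped element to a full Cuntz--Krieger $E(e_0,n)$-family inside $C^{*}(E(v_0,n))$, obtain $\Phi$ from the universal property, and check that $\Phi\circ\Psi$ and $\Psi\circ\Phi$ fix the generators, so injectivity comes for free and no uniqueness theorem is needed. The algebraic content --- the identities $P_{v_i}=S_{e_i}S_{e_i}^{*}$ and $Q_{v_i}=T_{e_i}T_{e_i}^{*}$ at the intermediate vertices, each of which emits exactly one edge, and the resulting telescoping --- is identical in the two arguments; what your route buys is the elimination of the cycle/spectrum analysis, at the modest cost of verifying the Cuntz--Krieger relations for a second family. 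Your verifications go through: in particular $t_{e_{n+1}}^{*}t_{e_{n+1}}=P_{v_n}=Q_{r(e_{n+1})}$ and $t_{e_{n+1}}t_{e_{n+1}}^{*}=S_{e_0}S_{e_0}^{*}$, so (CK3) at $s_{E}(e_0)$ (which need only be checked when that vertex is regular, and it is regular in one graph if and only if it is regular in the other, since $e_0\mapsto e_{n+1}$ gives a bijection of the emitted edge sets) follows by substituting term-for-term into the relation $P_{s_{E}(e_0)}=\sum_{e}S_eS_e^{*}$ holding in $C^{*}(E(v_0,n))$.
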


\begin{proof}
Let $\{ s_{e} , p_{v} \mid e \in E(e_0,n)^{1} , v \in E(e_0,n)^{0} \}$ be a universal Cuntz-Krieger $E(e_0,n)$-family generating $C^{*} ( E(e_0,n) )$.  For each $v \in E(v_0,n)^{0}$ and $e \in E(v_0,n)^{1}$ set 
\begin{align*}
Q_{v} &= p_{v} \\
T_{e} &= 
\begin{cases}
s_{e}	, &\text{if $e \neq e_{0}$} \\
s_{ e_{n+1} } s_{e_{n}} \cdots s_{ e_{1} }, &\text{if $e = e_{0}$}. 
\end{cases}
\end{align*}

We will show that $\{ T_{e} , Q_{v} \mid e \in E(v_0,n)^{1} , v \in E(v_0,n)^{0} \}$ is a Cuntz-Krieger $E(v_0,n)$-family that generates $C^{*} ( E(e_0,n) )$.  It is clear that $Q_{v} Q_{w} = 0$ for all $v \neq w$.  Let $e , f \in E( v_{0}, n)^{1}$ with $e \neq f$.  Then 
\begin{align*}
T_{e}^{*} T_{f} &=
\begin{cases}
s_{e}^{*} s_{f} , &\text{if $e \neq e_{0}$ and $f \neq e_{0}$} \\
s_{e_{1}}^{*} s_{e_{2}}^{*} \dots s_{ e_{n+1} }^{*} s_{f} , &\text{if $e = e_{0}$} \\
s_{e}^{*} s_{ e_{n+1} } s_{e_{n}} \cdots s_{ e_{1} }, &\text{if $f = e_{0}$} 
\end{cases} \\
&= 0.
\end{align*}
The last two cases hold true because $g \neq e_{n+1}$ for all $g \in E(v_0,n)^{1}$.

Now let $e \in E(v_0,n)^{1}$.  Then 
\begin{align*}
T_{e}^{*} T_{e} &=
\begin{cases}
s_{e}^{*} s_{e}, &\text{if $e \neq e_{0}$} \\
s_{e_{1}}^{*} s_{e_{2}}^{*} \dots s_{ e_{n+1} }^{*} s_{ e_{n+1} } s_{e_{n}} \cdots s_{ e_{1} }, &\text{if $e = e_{0}$} 
\end{cases} \\
&= 
\begin{cases}
p_{ r_{E(e_0,n)  } ( e )   }, &\text{if $e \neq e_{0}$}  \\
p_{ r_{E(e_0,n)  } ( e_{1} )  }, &\text{if $e = e_{0}$} 
\end{cases} \\
&= p_{ r_{ E(v_0,n)  } ( e )  }   \\
&= Q_{ r_{E(v_0,n)} ( e ) }
\end{align*} 
and
\begin{align*}
T_{e} T_{e}^{*} &=
\begin{cases}
 s_{e}s_{e}^{*}, &\text{if $e \neq e_{0}$} \\
s_{ e_{n+1} } s_{e_{n}} \cdots s_{ e_{1} } s_{e_{1}}^{*} s_{e_{2}}^{*} \dots s_{ e_{n+1} }^{*} , &\text{if $e = e_{0}$} 
\end{cases} \\
&\leq
\begin{cases}
p_{ s_{E(e_0,n) } ( e )  }, &\text{if $e \neq e_{0}$}  \\
p_{ s_{E(e_0,n) } ( e_{n+1} ) }, &\text{if $e = e_{0}$} 
\end{cases} \\
&= 
\begin{cases}
p_{ s_{E(v_0,n) } ( e )  }, &\text{if $e \neq e_{0}$}  \\
p_{ s_{E} ( e_{0} )  }, &\text{if $e = e_{0}$} 
\end{cases} \\
&= Q_{ s_{E(v_0,n)} ( e ) }.
\end{align*} 

Let $v \in E(v_0,n)^{0}$ be a regular vertex.  Note that $v$ is a regular vertex in $E(e_0,n)$.  Suppose $v = v_{i}$ for some $i = 1, \dots, n$.  Then $s_{E(e_0,n)}^{-1} ( v_{i} ) = \{ e_{i} \} = s_{E(v_0,n)}^{-1} ( v_{i} ) $.  Hence,
\begin{align*}
Q_{v}  = Q_{v_{i} } = p_{v_{i}} = s_{e_{i}} s_{e_{i}}^{*} = T_{e_{i}} T_{e_{i}}^{*}.
\end{align*}
Suppose $v \neq v_{i}$ for $i = 1, \dots, n$.  We break this into two cases.  Suppose $e_{n+1} \notin s_{E(e_0,n)}^{-1} ( v )$.  Then $v \neq s_{ E } ( e_{0} )$.  Since $v \neq v_{i}$ for $i = 1, \dots, n$ and $v \neq s_{E} ( e_{0} )$, we have that $s_{ E(v_0,n) }^{-1} ( v  ) \cap \{ e_{0} , e_{1} , \dots, e_{n} \}  = \emptyset$ and  $s_{ E(e_0,n) }^{-1} ( v  ) \cap \{ e_{1} , \dots, e_{n}, e_{n+1} \} = \emptyset$.  Thus, 
\begin{align*}
s_{ E(e_0,n) }^{-1} ( v ) = s_{ E }^{-1} ( v ) = s_{ E(v_0,n)}^{-1} ( v ) .
\end{align*}
Hence,
\begin{align*}
Q_{v} = p_{v} = \sum_{ e \in s_{ E(e_0,n) }^{-1} ( v ) } s_{e} s_{e}^{*} = \sum_{ e \in s_{ E(v_0,n) }^{-1} ( v ) } s_{e} s_{e}^{*} = \sum_{ e \in s_{ E(v_0,n) }^{-1} ( v ) } T_{e} T_{e}^{*}.
\end{align*}

Suppose $e_{n+1} \in s_{E(e_0,n)}^{-1} ( v )$.  Then $v= s_{E(e_0,n)} ( e_{n+1} )= s_{E} ( e_{0} )$, which implies that $e_{0} \in s_{E(v_0,n)}^{-1} ( v )$.  Note that $s_{e_{i}} s_{e_{i}}^{*} = p_{ v_{i} }$ for all $i = 1, 2, \dots, n$.  Thus, \begin{align*}
Q_{v} &= p_{v} = \sum_{ e \in s_{E(e_0,n)}^{-1} ( v ) } s_{e} s_{e}^{*} \\
	&= \sum_{ e \in s_{E}^{-1} ( v )\setminus \{e_{0}\} } s_{e} s_{e}^{*} + s_{ e_{n+1} } s_{ e_{n+1} }^{*} \\
	&=  \sum_{ e \in s_{E}^{-1} ( v )\setminus \{e_{0}\} } s_{e} s_{e}^{*} + s_{ e_{n+1} } p_{v_{n}} s_{ e_{n+1} }^{*} \\
	&=  \sum_{ e \in s_{E}^{-1} ( v )\setminus \{e_{0}\} } s_{e} s_{e}^{*} + s_{ e_{n+1} }  s_{ e_{n} } s_{e_{n} }^{*} s_{ e_{n+1} }^{*} \\
	&\ \vdots \\
	&=  \sum_{ e \in s_{E}^{-1} ( v )\setminus \{e_{0}\} } s_{e} s_{e}^{*} + s_{ e_{n+1} } s_{ e_{n} } \cdots s_{e_{1} } s_{e_{1} }^{*} \dots s_{ e_{n} } ^{*} s_{ e_{n+1} }^{*} \\
	&=  \sum_{ e \in s_{E}^{-1} ( v )\setminus \{e_{0}\} } T_{e} T_{e}^{*} + T_{e_{0}} T_{e_{0}}^{*} \\
	&=  \sum_{ e \in s_{E(v_0,n) }^{-1} ( v ) } T_{e} T_{e}^{*}.
\end{align*}

We have just shown that $\{ T_{e} , Q_{v} \mid e \in E(v_0,n)^{1} , v \in E(v_0,n)^{0} \}$ is a Cuntz-Krieger $E(v_0,n)$-family.  Suppose $\{ t_{e} , q_{v} \mid e \in E(v_0,n)^{1} , v \in E(v_0,n)^{0} \}$ is a universal Cuntz-Krieger $E(v_0,n)$-family generating $C^{*} ( E(v_0,n) )$.  Then there exists a $*$-homomorphism $\psi\colon C^{*} ( E(v_0,n) ) \to C^{*} ( E(e_0,n) ) $ such that 
\begin{align*}
\psi ( q_{v} ) &= Q_{v} \\
\psi ( t_{e} ) &= T_{e}
\end{align*}
for all $e \in E(v_0,n)^{1}$ and $v \in E(v_0,n)^{0}$.

Note that the only generator of $C^{*} ( E(e_0,n) )$ that is not included in 
\begin{align*}
\{ T_{e} , Q_{v} \mid e \in E(v_0,n)^{1} , v \in E(v_0,n)^{0} \}
\end{align*}
is $s_{e_{n+1}}$.  In this case, recall again that 
\begin{align*}
p_{v_{i}} = s_{e_{i}} s_{ e_{i} }^{*}
\end{align*}
for all $i = 1, 2, \dots, n$.  Therefore, 
\begin{align*}
T_{ e_{0} } T_{ e_{1} }^{*} \dots T_{ e_{n}}^{*} &= s_{ e_{n+1} } s_{ e_{n}} \dots s_{e_{1}}s_{ e_{1} }^{*} \dots s_{ e_{n}}^{*}  \\
					&=  s_{ e_{n+1} } s_{ e_{n}} \dots s_{ e_{2} } p_{v_{1}} s_{e_{2}}^{*}\dots s_{ e_{n}}^{*} \\
					&=  s_{ e_{n+1} } s_{ e_{n}} \dots s_{ e_{2} } p_{r_{ E(e_0,n) ( e_{2} ) } }  s_{e_{2}}^{*}\dots s_{ e_{n}}^{*} \\
					&= s_{ e_{n+1} } s_{ e_{n}} \dots s_{ e_{2} } s_{e_{2}}^{*}\dots s_{ e_{n}}^{*} \\
					&\ \vdots \\
					&= s_{ e_{n+1} } s_{ e_{n} } s_{e_{n}}^{*} \\
					&= s_{ e_{n+1} } p_{v_{n}} \\
					&= s_{ e_{n+1} } p_{ r_{E(e_0,n) }( e_{n+1} )  } \\
					&= s_{ e_{n+1}}.
\end{align*}
Hence, $s_{e_{n+1}} \in \psi ( C^{*} ( E(v_0,n) ) )$, which implies that $\psi$ is surjective.  

Note that the cycle structure of $E(v_0,n)$ is determined by the cycle structure of $E$ and vice versa.  Moreover, the cycles of $E(v_0,n)$ with no exits are in one-to-one correspondence to the cycles of $E(e_0,n)$ with no exits.  Let $\alpha = f_{1} f_{2} \cdots f_{m}$ be a vertex-simple cycle in $E(v_0,n)$ with no exits.  Suppose $s_{E(v_0,n)} ( f_{i} ) \neq s_{E(v_0,n)} ( e_{0} )$.  Then $\alpha$ is a vertex-simple cycle in $E(e_0,n)$ with no exits.  Thus, $s_{ \alpha }$ is a unitary in $C^{*}(E(e_0,n))$ with spectrum $\mathbb{T}$.  Hence, 
\begin{align*}
\psi ( t_{\alpha} ) = s_{ \alpha },
\end{align*}
which implies that $\psi ( t_{ \alpha } )$ is a unitary in $C^{*} (E(e_0,n))$ with spectrum $\mathbb{T}$.  Suppose $s_{E(v_0,n)} ( f_{i} ) = s_{E(v_0,n)} ( e_{0} )$.  Then $\alpha = e_{0} f_{2} \cdots f_{n}$ since $\alpha$ is a vertex-simple cycle in $E(v_0,n)$ with no exits.  Note that
\begin{align*}
\psi ( t_{ \alpha } ) = s_{ e_{n+1}} s_{ e_{n}} \cdots s_{ e_{1}} s_{f_{2}} \cdots s_{ f_{n}} = s_{ \beta }
\end{align*} 
and $\beta = e_{n+1} e_{n} \cdots e_{1} f_{2} \cdots f_{n}$ is a vertex-simple cycle in $E(e_0,n)$ with no exits.  Hence,  $\psi ( t_{ \alpha } ) = s_{ \beta }$ is a unitary in $C^{*} ( E(e_0,n) )$ with spectrum $\mathbb{T}$.  

From the above paragraph and the fact that $\psi ( q_{v} ) = p_{v} \neq 0$ for all $v \in E(v_0,n)^{0}$, by Theorem~1.2 of \cite{ws-general-ck-uniquness}, $\psi$ is injective.  Therefore, $\psi$ is an isomorphism.  
\end{proof}

\begin{remark}
Proposition~\ref{p:removing-sources} allows one to remove heads of finite length while preserving isomorphism classes.
\end{remark}

\begin{definition}
Let $E$ be a graph and let $H$ be a hereditary subset of $E^{0}$.  Consider the set
\begin{align*}
F( H ) = \{ \alpha \in E^{*} \mid \alpha = e_{1} e_{2} \dots e_{n} , s_{E} ( e_{n} ) \notin H , r_{E} ( e_{n} ) \in H \}.
\end{align*}
Let $\overline{F} (H)$ be another copy of $F (H)$ and we write $\overline{\alpha}$ for the copy of $\alpha$ in $\overline F (H)$.  Define a graph $E(H)$ as follows:
\begin{align*}
E(H)^{0} &= H \cup F(H) \\
E(H)^{1} &= s_{E}^{-1} (H) \cup \overline{F} (H) 
\end{align*}
and extend $s_{E}$ and $r_{E}$ to $E(H)$ by defining $s_{E(H)} ( \overline{\alpha} ) = \alpha$ and $r_{ E(H) } ( \overline{ \alpha } ) = r_{E} ( \alpha )$.
\end{definition}

Note that $E(H)$ is just the graph $( H , r_{E}^{-1}(H), r_{E}, s_{E} )$ together with a source for each $\alpha \in F(H)$ with exactly one edge from $\alpha$ to $r_{E}( \alpha )$.  

\begin{example}
Suppose $E$ is the graph 
\begin{align*}
\xymatrix{
v_{2} \ar[r]^{e_{2}} & v_{1}\ar[r]^{ e_{1} } & v_{0}  \ar@(ul, ur)[]^{e_{0}} \ar@(ur,dr)[]^{f}
}
\end{align*}
and $H = \{ v_{0} \}$.  Then $F(\{ v_{0} \} ) = \{ e_{1} , e_{2} e_{1} \}$.  Therefore, the graph 
\begin{align*}
\xymatrix{
e_{1} \ar[rd]^-{ \overline{e_{1}} }  & \\
				& v_{0}  \ar@(ul, ur)[]^{e_{0}} \ar@(ur,dr)[]^{f} \\
e_{2} e_{1} \ar[ru]_-{ \overline{ e_{2} e_{1} } } &
}
\end{align*}
represents the graph $E( \{ v_{0} \} )$.
\end{example}

\begin{theorem}\label{t:adding-sources}
Let $E$ be a graph and let $H$ be a hereditary subset of $E^{0}$.  Suppose 
\begin{align*}
( E^{0} \setminus H , r_{E}^{-1} ( E^{0} \setminus H ) , r_{E} , s_{E} )
\end{align*}
is a finite acyclic graph and $v \geq H$ for all $v \in E^{0} \setminus H$.  Assume furthermore that the set $s^{-1}(E^0\setminus H)\cap r^{-1}(H)$ is finite.
Then $C^{*} (E) \cong C^{*} ( E(H) )$.
\end{theorem}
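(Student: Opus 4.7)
The plan is to construct an explicit isomorphism $\phi\colon C^{*}(E(H))\to C^{*}(E)$ by exhibiting a Cuntz-Krieger $E(H)$-family inside $C^{*}(E)$. Let $\{s_{e},p_{v}\mid e\in E^{1},\,v\in E^{0}\}$ be a universal Cuntz-Krieger $E$-family generating $C^{*}(E)$. Inside $C^{*}(E)$ I would set
\begin{align*}
Q_{v} &= p_{v} \quad\text{for } v\in H, & T_{e} &= s_{e} \quad\text{for } e\in s_{E}^{-1}(H),\\
Q_{\alpha} &= s_{\alpha}s_{\alpha}^{*} \quad\text{for } \alpha\in F(H), & T_{\overline{\alpha}} &= s_{\alpha} \quad\text{for } \overline{\alpha}\in\overline{F}(H),
\end{align*}
verify that $\{Q_{v},T_{e}\}$ is a Cuntz-Krieger $E(H)$-family, and then apply the universal property of $C^{*}(E(H))$ to obtain $\phi$.

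The verification of the Cuntz-Krieger relations is largely mechanical, the one subtle point being the identity $s_{\alpha}^{*}s_{\beta}=0$ for distinct $\alpha,\beta\in F(H)$, which is needed both for mutual orthogonality of the projections $Q_{\alpha},Q_{\beta}$ and for (CK0) between $T_{\overline{\alpha}}$ and $T_{\overline{\beta}}$. Here one uses that neither of $\alpha,\beta$ can be a proper initial segment of the other: if $\alpha$ were, then the source of the next edge of $\beta$ would lie in $H$, and iterating hereditarity of $H$ would force $s_{E}(e_{n})\in H$ for the final edge of $\beta$, contradicting the defining condition of $F(H)$. Thus $\alpha$ and $\beta$ diverge at some position within their common length, whence $s_{\alpha}^{*}s_{\beta}=0$ by repeated (CK0) in $E$. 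Relation (CK3) at a vertex $\alpha\in F(H)$ is built into the definition of $Q_{\alpha}$, while (CK3) at a regular $v\in H$ in $E(H)$ coincides with (CK3) at $v$ in $E$ because hereditarity of $H$ gives $s_{E(H)}^{-1}(v)=s_{E}^{-1}(v)$.

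For injectivity of $\phi$ I would invoke the generalized Cuntz-Krieger uniqueness theorem (Theorem~1.2 of \cite{ws-general-ck-uniquness}), as used in the proof of Proposition~\ref{p:removing-sources}. The standard gauge-invariant uniqueness theorem is not directly applicable because $T_{\overline{\alpha}}$ has gauge degree one in $C^{*}(E(H))$ while $\phi(T_{\overline{\alpha}})=s_{\alpha}$ has gauge degree $|\alpha|$ in $C^{*}(E)$. All $Q_{v}$ are nonzero: $p_{v}\neq 0$ for $v\in H$, and $Q_{\alpha}=s_{\alpha}s_{\alpha}^{*}$ is Murray-von Neumann equivalent to $p_{r_{E}(\alpha)}\neq 0$. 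Since vertices of $F(H)$ are sources in $E(H)$, no cycle passes through them; therefore the cycles of $E(H)$ are precisely the cycles of $E$ lying inside $H$, and exit-freeness transfers because $s_{E(H)}^{-1}(v)=s_{E}^{-1}(v)$ for $v\in H$. Hence $\phi$ maps each exit-free cycle in $E(H)$ to the same cycle in $E$, whose associated unitary has full spectrum.

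Surjectivity is where the finiteness and acyclicity hypotheses enter, and I expect it to be the main technical obstacle. Every $v\in E^{0}\setminus H$ is regular in $E$: the edges leaving $v$ split into those lying in $r_{E}^{-1}(E^{0}\setminus H)$ (finite by hypothesis, as an edge set of the finite graph $E\setminus H$) and those in $s_{E}^{-1}(E^{0}\setminus H)\cap r_{E}^{-1}(H)$ (finite by hypothesis), and at least one exists because $v\geq H$. Iteratively applying (CK3), I would rewrite $p_{v}=\sum_{e\in s_{E}^{-1}(v)}s_{e}s_{e}^{*}$ and replace each summand $s_{e}s_{e}^{*}$ having $r_{E}(e)\notin H$ by $\sum_{f}s_{ef}s_{ef}^{*}$; since paths staying inside $E^{0}\setminus H$ have uniformly bounded length by the finite-acyclic hypothesis, this process terminates and produces a finite decomposition $p_{v}=\sum_{\alpha}s_{\alpha}s_{\alpha}^{*}$ indexed over $\alpha\in F(H)$ with $s_{E}(\alpha)=v$. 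The analogous identity $s_{e}=s_{e}p_{r_{E}(e)}=\sum_{\beta}s_{e\beta}s_{\beta}^{*}$ places every $s_{e}$ with $s_{E}(e)\notin H$ into the image, showing $\phi$ is surjective and hence an isomorphism.
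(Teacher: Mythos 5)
Your proposal matches the paper's proof in all essentials: the same Cuntz--Krieger $E(H)$-family $\{Q_v,T_e\}$ inside $C^*(E)$, the same appeal to the universal property, the same use of Szyma\'nski's generalized uniqueness theorem for injectivity (with the same observation that exit-free cycles of $E(H)$ live in $H$ and map to unitaries with full spectrum), and the same key points for surjectivity (regularity of vertices outside $H$ from the two finiteness hypotheses, and expansion of $p_v$ and $s_e$ along paths into $H$, which terminates by acyclicity --- the paper phrases this as an induction on the maximal distance to $H$, but it is the same computation). The argument is correct as stated.
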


\begin{proof}
Let $\{ s_e, p_v \mid e \in E^1, v \in E^0 \}$ be a universal Cuntz-Krieger $E$-family generating $C^*(E)$.  For $v \in E(H)^0$ define
$$Q_v := \begin{cases} 
p_v & \text{ if $v \in H$} \\
s_\alpha s_\alpha^* & \text{ if $v = \alpha \in F(H)$} \\
\end{cases}$$
and for $e \in E(H)^1$ define
$$T_e := \begin{cases} 
s_e & \text{ if $e \in s_E^{-1}(H)$} \\
s_\alpha & \text{ if $e = \overline{\alpha} \in \overline{F}(H)$} .\\
\end{cases}$$

We shall show that $\{ T_e, Q_v \mid e \in E(H)^1, v \in E(H)^0\}$ is a Cuntz-Krieger $E(H)$-family in $C^{*} (E)$.  To begin, we see that the $Q_v$ are mutually orthogonal projections and the $T_e$ are partial isometries with mutually orthogonal ranges.  (The orthogonality follows from the fact that an element in $F(H)$ cannot extend an element in $F (H )$ and the fact that $s_{ E } ( \alpha ) \notin H$ for all $\alpha \in F(H)$.)

To see the Cuntz-Krieger relations hold, we consider cases for $e \in E(H)^1$.  If $e \in s_{E}^{-1} (H)$, then $r_{H}(e) \in H$ and
$$T_e^* T_e = s_e^*s_e = p_{r_{E}(e)} = Q_{r_{E(H)}(e)}.$$
If $e = \overline{\alpha} \in F(H)$, then $r_{E}(\alpha) \in H$ and
$$T_e^* T_e = T_{\overline{\alpha}}^* T_{\overline{\alpha}} = s_\alpha^*s_\alpha = p_{r_{E}(\alpha)} = Q_{r_{E}(\alpha)} = Q_{r_{E(H)}(\overline{\alpha})} = Q_{r_{ E(H) }(e)}.$$

For the second Cuntz-Krieger relation, we again let $e \in E(H)^1$ and consider cases.  If $e \in s_{E}^{-1} (H)$, then 
$$Q_{s_{E(H)}(e)} T_e = p_{s_{E}(e)} s_e = s_e = T_e.$$
If $e = \overline{\alpha} \in \overline{F}(H)$, then 
$$Q_{s_{E(H)}(e)} T_e = Q_\alpha T_{\overline{\alpha}} = s_\alpha s_\alpha^* s_\alpha = s_\alpha = T_{\overline{\alpha}} = T_e.$$
Thus $Q_{s_{E(H)}(e)} T_e = T_e$ for all $e \in E(H)^1$, so that $T_e T_e^* \leq Q_{s_{E(H)}(e)}$ for all $e \in 
E(H)^1$, and the second Cuntz-Krieger relation holds.

For the third Cuntz-Krieger relation, suppose that  $v \in E(H)^0$ and that $v$ is regular.  If $v \in H$, then the set of edges that $v$ emits in $E(H)$ is equal to the set of edges that $v$ emits in $E$, and hence 
$$Q_v = p_v = \sum_{ \{e \in E^1 \mid s_{E}(e) = v \} } s_es_e^* = \sum_{ \{e \in E(H)^{1} \mid s_{E(H)}(e) = v\} } T_eT_e^*.$$
If $v \in F(H)$, then $v = \alpha$ with $r_{E(H)}(\alpha) \in H$, and the element $\overline{\alpha}$ is the unique edge in $E(H)^0$ with source $v$, so that 
$$Q_v = s_\alpha s_\alpha^* = T_e T_e^*.$$
Thus the third Cuntz-Krieger relation holds, and 
\begin{align*}
\{ T_e, Q_v \mid e \in E(H)^1, v \in E(H)^0\}
\end{align*}
is a Cuntz-Krieger $E(H)$-family in $C^{*} (E)$.

If $\{ q_v, t_e \mid v \in E(H)^0, e \in E(H)^1 \}$ is a universal Cuntz-Krieger $E(H)$-family generating $C^*(E(H))$, then by the universal property of $C^*(E(H))$ there exists a $*$-homomorphism $\phi : C^*(E(H)) \to C^{*}(E)$ with $\phi (q_v) = Q_v$ for all  $v \in E(H)^{0}$ and $\phi(t_e) = T_e$ for all  $e \in E(H)^1$. 

We shall show injectivity of $\phi$, by applying the generalized Cuntz-Krieger uniqueness theorem of \cite{ws-general-ck-uniquness}.  To verify the hypotheses, we first see that if $v \in E(H)^0$, then $\phi( q_v) = Q_v \neq 0$.  Second, if $e_1 \ldots e_n$ is a vertex-simple cycle in $E(H)$  with no exits, then since the cycles in $E(H)$ come from cycles in $E$ all lying in the subgraph given by 
\begin{align*}
( H , s_{E}^{-1} (H) , s_{E} , r_{E} ),
\end{align*}
we must have that $e_i \in E^1$ for all $1 \leq i \leq n$, and $e_1 \ldots e_n$ is a cycle in $E$ with no exits.  Thus $\phi(t_{e_1 \ldots e_n}) = \phi(t_{e_1}) \ldots \phi(t_{e_n}) = s_{e_1} \ldots s_{e_n} = s_{e_1 \ldots e_n}$ is a unitary whose spectrum is the entire circle.  It follows from the generalized Cuntz-Krieger uniqueness theorem, stated in Theorem~1.2 of \cite{ws-general-ck-uniquness}, that $\phi$ is injective.

We now show that $\phi$ is surjective.  Let $e \in E^{1}$ such that $r_{E} ( e ) \in H$.  If $s_{E} (e) \in H$, then 
\begin{align*}
s_{e} = T_{{e}} = \phi ( t_{{e} } ) \in \mathrm{im} ( \phi ).
\end{align*}
Suppose $s_{E} ( e ) \notin H$.  So, $e \in F(H)$.  Hence,
\begin{align*}
s_{e} = T_{\overline e} = \phi ( t_{\overline{e} } ) \in \mathrm{im} ( \phi ).
\end{align*}
We now show that $s_e\in\im(\phi)$ for all $e\in r_E^{-1}(E^0\setminus H)$.
By assumption, $v \geq H$ for all $v\in E^0\setminus H$.  Define for each $k$ the subset $D_k$ of $E^0\setminus H$ as the set of vertices $v$ for which $k$ is the maximal length of a path from $v$ to $H$.  
Put $D_0=H$, and note that for $k\geq 1$, all vertices in $D_k$ are regular.
By induction on $k\geq 1$ we will show for every path $\alpha$ in $E$ with $r_E(\alpha)\in D_k$ that $s_\alpha\in\im(\phi)$.

For $k=1$ and $\alpha$ a path in $E$ with $r_E(\alpha)\in D_1$, we note that $r_E(e)\in H$ for all $e\in s_E^{-1}(r_E(\alpha))$. Hence
\begin{align*}
s_\alpha &= s_\alpha p_{r_E(\alpha)} = \sum_{e\in s_E^{-1}(r_E(\alpha))}s_\alpha s_es_e^* \\
&= \sum_{e\in s_E^{-1}(r_E(\alpha))}T_{\overline{\alpha e}}T_{\overline e}^* \\
&= \sum_{e\in s_E^{-1}(r_E(\alpha))}\phi(t_{\overline{\alpha e}}t_{\overline e}^*) \in\im(\phi)
\end{align*}
since $\alpha e,e\in F(H)$.

For $k>1$ and $\alpha$ a path in $E$ with $r_E(\alpha)\in D_k$, we note that for all $e\in s_E^{-1}(r_E(\alpha))$ there is a $j<k$ for which $r_E(\alpha e)=r_E(e)\in D_j$.
Hence 
\[ s_\alpha = \sum_{e\in s_E^{-1}(r_E(\alpha))}s_{\alpha e}s_e^* \in\im(\phi). \]

We have just shown that $s_{e} \in \mathrm{im} ( \phi )$ for all $e \in E^{1}$.  We now show that $p_{v} \in \mathrm{im} ( \phi )$ for all $v \in E^{0}$.  Note that if $v \in E^{0}$ and $v$ is not a regular vertex, then $v \in H$.  Hence, $p_{v} = Q_{v} = \phi ( q_{v} )$.  Let $v \in E^{0}$ be a regular vertex.  Then for each $e \in s_{E}^{-1} ( v )$, we have that $s_{e} , s_{e}^{*} \in \mathrm{im} ( \phi )$.  Therefore,
\begin{align*}
p_{v} = \sum_{ e \in s_{E}^{-1} ( v ) } s_{e} s_{e}^{*} \in \mathrm{im} ( \phi ).
\end{align*}

Since $\{ p_{v}, s_{e} \mid v \in E^{0} , e \in E^{1} \} \subseteq \mathrm{im} ( \phi )$ and $\{ p_{v}, s_{e} \mid v \in E^{0} , e \in E^{1} \}$ generates $C^{*} (E)$ we have that $\phi$ is surjective.  Therefore, $\phi$ is an isomorphism.
\end{proof}

\begin{definition}
Let $E$ be a graph, let $v_0\in E^0$ be a vertex in $E$, and let $n$ be a positive integer.  Define a graph $E'(v_0,n)$ as follows:
\begin{align*}
E'(v_0,n)^{0} &= E^{0} \cup \{ v_{1} ,v_{2} , \dots, v_{n} \} \\
E'(v_0,n)^{1} &= E^{1} \cup \{ e_{1} , e_{2}, \dots, e_{n} \}
\end{align*}
where $r_{E'(v_0,n)}$ and $s_{E'(v_0,n)}$ extends $r_{E}$ and $s_{E}$ respectively, and $r_{E'(v_0,n)} ( e_{i} ) = v_{0}$ and $s_{E'(v_0,n)} ( e_{i} ) = v_{i}$ for all $i=1,\dots n$.
\end{definition}

\begin{corollary}\label{c:adding-sources}
Let $E$ be a graph, let $v_0\in E^0$ be a vertex, and let $n$ be a positive integer.  Then $C^*(E(v_0,n))\cong C^*(E'(v_0,n))$.
\end{corollary}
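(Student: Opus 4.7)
The plan is to recognize $E'(v_0,n)$ as $E(v_0,n)(H)$ for the hereditary subset $H = E^0 \subseteq E(v_0,n)^0$ and then invoke Theorem~\ref{t:adding-sources}. First I will check that $H = E^0$ is hereditary in $E(v_0,n)$: the only edges of $E(v_0,n)$ with range in $\{v_1,\dots,v_n\}$ are $e_2,\dots,e_n$, and each of these has source inside $\{v_1,\dots,v_n\}$ as well, so no vertex of $E^0$ reaches a vertex outside $E^0$.

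Next I will verify the remaining hypotheses of Theorem~\ref{t:adding-sources}. The subgraph of $E(v_0,n)$ induced on $E(v_0,n)^0\setminus H = \{v_1,\dots,v_n\}$ is the finite acyclic line $v_n \to v_{n-1}\to\cdots\to v_1$; every $v_i$ reaches $v_0 \in H$ via $e_i e_{i-1}\cdots e_1$; and the set $s_{E(v_0,n)}^{-1}(E(v_0,n)^0\setminus H)\cap r_{E(v_0,n)}^{-1}(H)$ equals $\{e_1\}$, which is finite. I will then compute $F(H)$ explicitly: since $e_1$ is the unique edge of $E(v_0,n)$ whose source lies outside $H$ and whose range lies in $H$, every path in $F(H)$ terminates in $e_1$, and backtracking along the unique incoming edge of each $v_k$ (together with the fact that $v_n$ is a source) gives $F(H) = \{e_k e_{k-1}\cdots e_1 : 1 \le k \le n\}$.

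Finally I will identify $E(v_0,n)(H)$ with $E'(v_0,n)$ through the bijection $e_k\cdots e_1 \mapsto v_k$ on added vertices and $\overline{e_k\cdots e_1}\mapsto e_k$ on added edges: both constructions attach $n$ new sources to $v_0$ by exactly one edge each, so this is a graph isomorphism. Theorem~\ref{t:adding-sources} then delivers $C^*(E(v_0,n)) \cong C^*(E(v_0,n)(H)) \cong C^*(E'(v_0,n))$. I do not foresee a real obstacle here; the argument is simply a matching of two combinatorial constructions, and the only nontrivial content, namely that one may collapse finite acyclic ``heads'' ending in a hereditary set without changing the graph $C^*$-algebra, has already been supplied by Theorem~\ref{t:adding-sources}.
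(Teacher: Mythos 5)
Your proposal is correct and follows the paper's own proof exactly: both identify $E'(v_0,n)$ with $E(v_0,n)(E^0)$, verify the hypotheses of Theorem~\ref{t:adding-sources} for the hereditary subset $H=E^0$, and conclude via that theorem. Your explicit computation of $F(H)=\{e_k\cdots e_1 : 1\le k\le n\}$ and of the graph isomorphism is a correct filling-in of the step the paper leaves as ``one can verify.''
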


\begin{proof}
Note that $E^{0}$ is a hereditary subset of $E(v_0,n)^{0}$.  Moreover $(E(v_0,n)^{0} \setminus E^{0} , r_{E(v_0,n)}^{-1} (E(v_0,n)^{0} \setminus E^{0} ) , r_{E(v_0,n)} , s_{E(v_0,n)} )$ is a finite acyclic graph and for each $v \in E(v_0,n)^{0} \setminus E^{0}$, there exists a path in $E(v_0,n)$ from $v$ to $E^{0}$.
Finally,
$s_{E(v_0,n)}(E(v_0,n)^0\setminus E^0)\cap r_{E(v_0,n)}^{-1}(E^0)$
is finite.
Thus, by Theorem~\ref{t:adding-sources}, $C^{*}(E(v_0,n)) \cong C^{*} (E(v_0,n)( E^{0} ) )$.  One can verify that the graph $E(v_0,n)( E^{0} )$ is isomorphic to the graph $E'(v_0,n)$.  Thus, $C^{*} (E(v_0,n)( E^{0} ) ) \cong C^{*} (E'(v_0,n))$.
\end{proof}

\begin{example}\label{e:graphsconstruction}
We give an example to illustrate the proof of Corollary~\ref{c:adding-sources}.
Consider the graph $E$ of Example~\ref{example}.
Then $E(v_0,2)$ is the graph
\begin{align*}
\xymatrix{
v_{2} \ar[r]^{e_{2}} & v_{1}\ar[r]^{ e_{1} } & v_{0}  \ar@(ul, ur)[]^{e_{0}} \ar@(ur,dr)[]^{f}
}
\end{align*}
and thereby $E(v_0,2)(\{v_0\})$ is the graph
\begin{align*}
\xymatrix{
e_{1} \ar[rd]^-{ \overline{e_{1}} }  & \\
				& v_{0}  \ar@(ul, ur)[]^{e_{0}} \ar@(ur,dr)[]^{f} \\
e_{2} e_{1} \ar[ru]_-{ \overline{ e_{2} e_{1} } } &
}
\end{align*}
which is isomorphic to the graph $E'(v_0,2)$ 
\begin{align*}
\xymatrix{
v_{1} \ar[rd]^-{ e_{1} }  & \\
				& v_{0}  \ar@(ul, ur)[]^{e_{0}} \ar@(ur,dr)[]^{f} \\
v_{2}  \ar[ru]_-{ e_{2} } & .
}
\end{align*}
\end{example}

\begin{theorem}\label{t:phantom-ck-algebras}
Let $E$ be a graph.  Then the following are equivalent:
\begin{itemize}
\item[(1)] $E$ is finite graph with no sinks.

\item[(2)] $C^{*} (E)$ is isomorphic to a Cuntz-Krieger algebra.

\item[(3)] $C^{*} (E)$ is unital and 
\begin{align*}
\mathrm{rank} ( K_{0} ( C^{*} (E) ) ) = \mathrm{rank} ( K_{1} ( C^{*} ( E ) ) ).
\end{align*}
\end{itemize}
\end{theorem}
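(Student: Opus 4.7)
The plan is to prove the cyclic implications $(2)\Rightarrow(3)\Rightarrow(1)\Rightarrow(2)$, with $(1)\Rightarrow(2)$ carrying the substance.

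For $(2)\Rightarrow(3)$, I would argue that a Cuntz-Krieger algebra $C^{*}(F)$ is unital with unit $\sum_{v\in F^{0}} p_{v}$, and since $F$ is finite with neither sinks nor sources every vertex of $F$ is regular; the standard identifications $K_{0}(C^{*}(F))\cong\coker(I-A_{F}^{T})$ and $K_{1}(C^{*}(F))\cong\ker(I-A_{F}^{T})$ for the same finite square integer matrix then force equal ranks by rank-nullity. For $(3)\Rightarrow(1)$, unitality of $C^{*}(E)$ forces $E^{0}$ to be finite, and for graphs with finite vertex set the standard computation of graph-algebra $K$-theory via the map $\mathbb{Z}^{E^{0}_{\reg}}\to\mathbb{Z}^{E^{0}}$ yields
\[
\rank K_{0}(C^{*}(E))-\rank K_{1}(C^{*}(E))=|E^{0}|-|E^{0}_{\reg}|,
\]
the number of sinks plus infinite emitters. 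Equal ranks therefore rule out both, which, together with $E^{0}$ finite, makes $E$ a finite graph with no sinks.

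For the main implication $(1)\Rightarrow(2)$, I would let $H$ be the hereditary closure in $E^{0}$ of the set of vertices lying on cycles of $E$. Because $E$ is finite and sinkless, iterating outgoing edges from any vertex must eventually revisit a vertex, so every $v\in E^{0}\setminus H$ satisfies $v\geq H$; moreover $E^{0}\setminus H$ is finite and acyclic (every cycle of $E$ lies in $H$), and only finitely many edges go from $E^{0}\setminus H$ into $H$. Thus Theorem~\ref{t:adding-sources} applies to give $C^{*}(E)\cong C^{*}(E(H))$. The graph $E(H)$ equals the subgraph $E|_{H}$ together with, for each $v\in H$, $n_{v}:=|\{\alpha\in F(H):r_{E}(\alpha)=v\}|$ extra source vertices each emitting a single edge to $v$. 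Crucially, $E|_{H}$ is itself finite with no sinks (by hereditariness) and with no sources, since every $v\in H$ is reached within $H$ by a path starting on a cycle and therefore has an incoming edge in $E|_{H}$.

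To finish, I would eliminate the extra sources one vertex at a time: at each $v\in H$ with $n_{v}>0$, apply Corollary~\ref{c:adding-sources} to replace the bouquet of $n_{v}$ single-edge sources at $v$ by a single head of length $n_{v}$ attached at $v$, then apply Proposition~\ref{p:removing-sources} with some incoming edge of $E|_{H}$ terminating at $v$ to absorb that head into a subdivision of that edge. Such an edge exists because $E|_{H}$ has no sources, and distinct vertices use distinct incoming edges (edges have unique targets), so the replacements can be iterated without interference. The outcome is a finite graph with no sinks and no sources and the same $C^{*}$-algebra as $E$, exhibiting $C^{*}(E)$ as a Cuntz-Krieger algebra. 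The main obstacle is the choice of $H$: it must be large enough that $E|_{H}$ has no sources (so that Proposition~\ref{p:removing-sources} can eliminate every head) yet small enough that the acyclicity and finiteness hypotheses of Theorem~\ref{t:adding-sources} hold; the hereditary closure of the cycle vertices strikes precisely this balance.
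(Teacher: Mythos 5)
Your proposal is correct and follows essentially the same route as the paper: the $K$-theoretic implications $(2)\Rightarrow(3)\Rightarrow(1)$ via the Drinen--Tomforde computation, and for $(1)\Rightarrow(2)$ the passage to a hereditary subset $H$ with sourceless, sinkless restriction (your hereditary closure of the cycle vertices coincides with the paper's subgraph obtained by iteratively deleting sources), followed by Theorem~\ref{t:adding-sources}, Corollary~\ref{c:adding-sources}, and Proposition~\ref{p:removing-sources}. Your description of the head-elimination step is in fact slightly more explicit than the paper's ``apply as many times as needed.''
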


\begin{proof}
We first show (1) implies (2).  Suppose $E$ is a finite graph with no sinks.  Remove the sources from $E$, and remove the vertices that then become sources; repeat this procedure finitely many times to get a subgraph $F$ of $E$ that has no sinks and no sources.
Notice that $F^{0}$ is a hereditary subset of $E^{0}$, that
\begin{align*}
( E^{0} \setminus F^{0} , r_{E}^{-1} ( E^{0} \setminus F^{0} ) , r_{E} , s_{E} )
\end{align*}
is a finite acyclic graph, and that for each $v \in E^{0} \setminus F^{0}$, there exists a path in $E$ from $v$ to $F^{0}$.
Therefore, by Theorem~\ref{t:adding-sources}, $C^{*} ( E ) \cong C^{*} ( E( F^{0} ) )$.  We can apply Corollary~\ref{c:adding-sources} and Proposition~\ref{p:removing-sources} as many times as needed (but finitely many times), to get a finite graph $E_{1}$ with no sinks and no sources such that $C^{*} ( E( F^{0} ) ) \cong C^{*} (E_{1})$.  Note that $C^{*} (E_{1})$ is a Cuntz-Krieger algebra and $C^{*} (E) \cong C^{*} (E_{1})$.

We next show (2) implies (3).  Suppose $C^{*} (E)$ is isomorphic to a Cuntz-Krieger algebra.  Then $C^{*} (E)$ is unital.  Moreover, by the $K$-theory computation (Theorem~3.1 of \cite{ddmt_kthygraph}), 
\begin{align*}
\mathrm{rank} ( K_{0} ( C^{*} (E) ) ) = \mathrm{rank} ( K_{1} ( C^{*} ( E ) ) ).
\end{align*}

We now show (3) implies (1).  Suppose $C^{*} (E)$ is unital.  Then $E^{0}$ is a finite set.  Since 
\begin{align*}
\mathrm{rank} ( K_{0} ( C^{*} (E) ) ) = \mathrm{rank} ( K_{1} ( C^{*} ( E ) ) ),
\end{align*}
by the $K$-theory computation (Theorem~3.1 of \cite{ddmt_kthygraph}), $E$ has no singular vertices.  Hence, $E$ is a finite graph with no sinks.
\end{proof}

\begin{definition} \label{d:stabilization}
Let $E$ be a graph and let $SE$ be the graph obtained by adding an infinite head to every vertex of $E$.  
\begin{align*}
 \xymatrix{
E: \ 	&			&	& \tikz \shade[ball color=black] (0,0) circle (1mm);& & \tikz \shade[ball color=black] (0,0) circle (1mm); \\
	&			&	& & \tikz \shade[ball color=black] (0,0) circle (1mm); \ar[ru] \ar[lu] & \\
SE : \ & \cdots \tikz \shade[ball color=black] (0,0) circle (1mm); \ar[r] &  \tikz \shade[ball color=black] (0,0) circle (1mm); \ar[r] & \tikz \shade[ball color=black] (0,0) circle (1mm); & & \tikz \shade[ball color=black] (0,0) circle (1mm); & \ar[l] \tikz \shade[ball color=black] (0,0) circle (1mm);  & \tikz \shade[ball color=black] (0,0) circle (1mm); \ar[l] \cdots \\
	& &  & & \tikz \shade[ball color=black] (0,0) circle (1mm); \ar[ru] \ar[lu] & \ar[l] \tikz \shade[ball color=black] (0,0) circle (1mm); & \tikz \shade[ball color=black] (0,0) circle (1mm); \ar[l] \cdots & \\
}
\end{align*}
We call $SE$ the \emph{stabilization} of $E$.
\end{definition}

\begin{theorem}\label{t:fullcorners-stablized}
Let $E$ be a graph with finitely many vertices and let $T$ be a finite hereditary subset of $(SE)^{0}$ such that $E^{0} \subseteq T$. Set 
\begin{align*}
p_{T} = \sum_{ v \in T } p_{v}
\end{align*}
where $\{ s_{e} , p_{v} \mid e \in (SE)^{1} , v \in ( SE )^{0} \}$ is a universal Cuntz-Krieger $SE$-family generating $C^{*} (SE)$.  Then $p_{T}$ is a full projection in $C^{*} (SE)$ and there exists a subgraph $F$ of $SE$ such that $C^{*} (F) \cong p_{T}C^{*} ( SE )p_{T}$.

If in addition $C^{*} (E)$ is a Cuntz-Krieger algebra, then $p_{T} C^{*} ( SE ) p_{T}$ is a Cuntz-Krieger algebra.
\end{theorem}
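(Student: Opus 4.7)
The plan is to realize the corner $p_T C^*(SE) p_T$ as $C^*(F)$ for a subgraph $F$ of $SE$ built on the vertex set $T$, and then invoke Theorem~\ref{t:phantom-ck-algebras} for the Cuntz-Krieger conclusion. For fullness of $p_T$, I would argue directly: each vertex $v \in (SE)^0 \setminus T$ lies on an infinite head attached to some vertex of $E^0 \subseteq T$, so there is a finite path $v = v_k \to v_{k-1} \to \cdots \to v_0$ with $v_0 \in T$ and each $v_j$ (for $j \geq 1$) a regular head vertex emitting a single edge $e_j$ to $v_{j-1}$. Iterated use of (CK3) gives $p_{v_j} = s_{e_j} p_{v_{j-1}} s_{e_j}^*$, placing $p_v$ in the ideal generated by $p_{v_0}$ and therefore in the ideal generated by $p_T$.

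For the corner identification, I would take $F = (T, s_{SE}^{-1}(T), r_{SE}, s_{SE})$. Heredity of $T$ guarantees $r_{SE}(F^1) \subseteq T$, so $F$ is a subgraph of $SE$, and because every $SE$-edge with source in $T$ already lies in $F^1$, the inclusion $F \hookrightarrow SE$ is a CK-morphism. Lemma~\ref{l:ck-subgraph} then supplies an injective $*$-homomorphism $\phi \colon C^*(F) \to C^*(SE)$ with image equal to the subalgebra generated by $\{p_v, s_e : v \in T,\ s_{SE}(e) \in T\}$. This image visibly lies in $p_T C^*(SE) p_T$, and conversely any spanning element $s_\alpha s_\beta^*$ of the corner satisfies $s_{SE}(\alpha), s_{SE}(\beta) \in T$, so heredity propagates along $\alpha$ and $\beta$ and forces both to be paths in $F$.

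For the final claim, assume $C^*(E)$ is a Cuntz-Krieger algebra; Theorem~\ref{t:phantom-ck-algebras} then tells us $E$ is a finite graph with no sinks. Attaching heads neither adds nor removes sinks, so $SE$ has no sinks, and any sink of $F$ would be a sink of $SE$ lying in $T$, which is impossible. Since $T$ is finite and $E$ is finite, $F$ is a finite graph with no sinks, so Theorem~\ref{t:phantom-ck-algebras} delivers that $C^*(F)$ is a Cuntz-Krieger algebra; combining with the isomorphism $C^*(F) \cong p_T C^*(SE) p_T$ from the previous step finishes the proof. I expect the main obstacle to be the heredity bookkeeping in the corner identification, since one must simultaneously verify that $F \hookrightarrow SE$ is a CK-morphism and propagate heredity through the arbitrary paths appearing in the spanning set of $p_T C^*(SE) p_T$; once $F$ is set up as a CK-subgraph, the remaining steps follow quickly from Lemma~\ref{l:ck-subgraph} and Theorem~\ref{t:phantom-ck-algebras}.
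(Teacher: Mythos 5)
Your proposal is correct and follows essentially the same route as the paper: the same subgraph $F=(T,s_{SE}^{-1}(T),r_{SE},s_{SE})$, the same appeal to Lemma~\ref{l:ck-subgraph} to identify $C^*(F)$ with the subalgebra generated by $\{p_v,s_e : v, s_{SE}(e)\in T\}$, the same spanning-element computation showing that subalgebra equals $p_TC^*(SE)p_T$, and the same use of Theorem~\ref{t:phantom-ck-algebras} for the final claim. Your fullness argument via iterated (CK3) along the heads is just a direct unwinding of the paper's one-line observation that the smallest saturated set containing $T$ is all of $(SE)^0$.
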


\begin{proof}
The smallest saturated subset of $(SE)^{0}$ containing $T$ is $(SE)^{0}$.  Hence, $p_{T}$ is a full projection. 

Let $F = ( T , s_{SE}^{-1} (T) , r_{ SE} , s_{SE} )$.  We claim that $F$ is a 
CK-subgraph of $SE$.  It is clear that $F$ is a subgraph of $SE$.  We will show that $s_{E}^{-1} ( v ) = s_{F}^{-1} ( v )$ for all $v \in F^{0}$.  Let $v \in F^{0}$.  Suppose $v \in E^{0}$.  Then 
\begin{align*}
s_{SE}^{-1} ( v ) = s_{E}^{-1} ( v ) = s_{ F }^{-1} ( v ).
\end{align*}
Suppose $v \in T \setminus E^0$.
Then $s_{SE}^{-1} ( v ) = \{ e \} = s_{F}^{-1} ( v )$ for some $e$.  Since $F$ is a
CK-subgraph of $SE$, we have by Lemma~\ref{l:ck-subgraph}
that $C^{*} (F)$ is isomorphic to the subalgebra of $C^{*} (SE)$ generated by 
\begin{align*}
\{ p_{v} , s_{e} \mid s_{SE} ( e ) , v \in T \},
\end{align*}
which we denote by $B$.  We claim that $p_{T} C^{*} (SE ) p_{T} = B$.  

Note that $B$ is unital with unit $p_{T}$.  Note that if $e \in s_{SE}^{-1} ( T )$, then $s_{SE} ( e )$ and $r_{SE} ( e )$ are elements of  $T$.  Therefore, 
for all $v\in T$ and all $e\in s_{SE}^{-1}(T)$,
\begin{align*}
p_{v} = p_{T} p_{v} p_{T} \in p_{T} C^{*} (SE ) p_{T}  
\end{align*}
and
\begin{align*}
s_{e} = p_{s_{SE} (e) } s_{e} p_{r_{SE} (e) } = p_{T} p_{s_{SE} (e) } s_{e} p_{r_{SE} (e) } p_{T}  \in p_{T} C^{*} (SE ) p_{T}. 
\end{align*}
Hence, $B$ is a subalgebra of $p_{T} C^{*} ( SE ) p_{T}$.  

Let $\alpha$ be a finite path in $SE$.  Suppose $s_{SE} (\alpha )$ is not an element of $T$.  Then 
\begin{align*}
p_{T} p_{ s_{SE} ( \alpha ) } = 0.
\end{align*}
If $s_{SE} (\alpha ) \in T$, then 
\begin{align*}
p_{T} p_{ s_{SE} ( \alpha ) }  = p_{ s_{SE} ( \alpha ) }.
\end{align*}
From these observations, we get that
\begin{align*}
p_{T} s_{\alpha} s_{ \beta }^{*} p_{T} &= 
\begin{cases}
s_{ \alpha } s_{ \beta }^{*}, &\text{if $s_{SE} ( \alpha ), s_{SE} ( \beta ) \in T$} \\
0, &\text{otherwise}.
\end{cases}
\end{align*}
Since $e \in s_{SE}^{-1} ( T )$ implies that $s_{SE} ( e )$ and $r_{SE} ( e )$ are elements of  $T$, we have that $\alpha$ is a path in $F$ if $s_{SE} ( \alpha ) \in T$.  Therefore, if $s_{SE} ( \alpha ), s_{SE} ( \beta ) \in T$, then $s_{\alpha} , s_{\beta}^{*} \in B$.  Hence, 
\begin{align*}
p_{T} s_{\alpha} s_{ \beta }^{*} p_{T} 
\end{align*}
is an element of $B$ for all paths $\alpha$ and $\beta$ in $SE$.  We have just shown that $B = p_{T} C^{*}  ( SE ) p_{T}$ which implies that $C^{*} (F) \cong B = p_{T} C^{*} ( SE ) p_{T}$.

Assume that $C^*(E)$ is isomorphic to a Cuntz-Krieger algebra.  Then by Theorem~\ref{t:phantom-ck-algebras}, the graph $E$ is finite and has no sinks.  Since $F$ is a graph obtained from the graph $E$ by adding a finite head to some vertices of $E$, the graph $F$ is finite and with no sinks.  By Theorem~\ref{t:phantom-ck-algebras}, $C^*(F)$ is a Cuntz-Krieger algebra.
\end{proof}

\section{Unital $C^*$-algebras that are stably isomporphic to Cuntz-Krieger algebras}

\begin{definition}
For a $C^*$-algebra $A$, and projections $p \in \mathsf M_n(A)$ and $q \in \mathsf M_m(A)$, we say $p$ is \emph{Murray-von Neumann equivalent} to $q$, denoted $p \sim q$, if there exists $v \in \mathsf M_{m,n} (A)$ with $p=v^*v$ and $q = vv^*$.

For a projection $p$ in a $C^{*}$-algebra $A$ and $n \in \N$, $n p$ will denote the projection 
\begin{align*}
\underbrace{p \oplus p \cdots \oplus p}_{ \text{$n$-times} } \in \mathsf{M}_{n} (A).
\end{align*}
\end{definition}

\begin{lemma}\label{l:projects}
Let $E$ be a graph and let $\{p_v,s_e \mid v\in E^0, e\in E^1 \}$ denote a universal Cuntz-Krieger $E$-family generating $C^*(E)$.  Let $v\in E^0$ and assume that $v$ is a regular vertex.
Then \[p_v\sim \sum_{e\in s^{-1}(v)}p_{r_E(e)}.\]
\end{lemma}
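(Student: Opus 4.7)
The plan is to construct an explicit partial isometry in a matrix algebra over $C^*(E)$ that implements the Murray--von Neumann equivalence, using directly the Cuntz--Krieger relations.

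Since $v$ is regular, the set $s_E^{-1}(v)$ is finite and nonempty; enumerate it as $\{e_1,\dots,e_n\}$. First I would assemble the partial isometries $s_{e_1},\dots,s_{e_n}$ into a single row vector
\[
V = (s_{e_1}\ s_{e_2}\ \cdots\ s_{e_n}) \in \mathsf M_{1,n}(C^*(E)).
\]
This will be the candidate element implementing the equivalence.

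Next I would compute $VV^*$ and $V^*V$. Using (CK3) one gets immediately
\[
VV^* = \sum_{i=1}^n s_{e_i}s_{e_i}^* = p_v,
\]
which identifies one end of the equivalence with $p_v$ (viewed in $\mathsf M_1(C^*(E))$). For the other end, the $(i,j)$-entry of $V^*V$ equals $s_{e_i}^*s_{e_j}$; by (CK0) this vanishes for $i\neq j$, and by (CK1) it equals $p_{r_E(e_i)}$ for $i=j$. Hence $V^*V$ is the diagonal matrix with entries $p_{r_E(e_1)},\dots,p_{r_E(e_n)}$, which is precisely $\sum_{e\in s_E^{-1}(v)}p_{r_E(e)}$ under the direct-sum convention for the sum of (possibly non-orthogonal) projections indexed over $s_E^{-1}(v)$.

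Combining these two identities gives $p_v \sim \sum_{e\in s_E^{-1}(v)}p_{r_E(e)}$, as required. There is no real obstacle here; the only thing to be careful about is the interpretation of the right-hand side as a diagonal element of $\mathsf M_n(C^*(E))$ (so that repeated range vertices are counted with multiplicity), which matches the convention implicit in the preceding notational definition of $np$.
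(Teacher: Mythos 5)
Your proof is correct and is precisely the argument the paper has in mind: the paper simply states that the result ``follows directly from the Cuntz-Krieger relations,'' and your explicit row vector $V=(s_{e_1}\ \cdots\ s_{e_n})$ with $VV^*=p_v$ by (CK3) and $V^*V=\operatorname{diag}(p_{r_E(e_1)},\dots,p_{r_E(e_n)})$ by (CK0) and (CK1) is the standard way to spell that out. Your remark about interpreting the sum as a diagonal projection in $\mathsf M_n(C^*(E))$, consistent with the paper's convention for $np$, is also the right reading.
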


\begin{proof}
The result follows directly from the Cuntz-Krieger relations, see Definition~\ref{def:graph}.
\end{proof}

\begin{lemma} \label{basiclemma}
Let $E$ be a row-finite graph and let $\{p_v,s_e \mid v\in E^0, e\in E^1 \}$ denote a universal Cuntz-Krieger $E$-family generating $C^*(E)$. 
Let $v,w\in E^0$ with $v \neq w$.  If there is a path from $v$ to $w$ in $E$, then there exists a family $(m_u(v,w))_{u\in E^0}$ of non-negative integers satisfying
\[ p_v\sim p_w + \sum_{u\in E^0} m_u(v,w)p_u \]
with all but finitely many $m_u(v,w)$ equal to zero.
Moreover, $m_{v} (v,w)$ can be chosen such that 
\begin{align*}
m_{v} ( v, w ) \geq | \{ e \in E^{1} \mid s_{E} (e) = r_{E} (e) = v \} |.
\end{align*}
\end{lemma}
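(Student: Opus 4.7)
The plan is to induct on the length $n$ of a shortest path in $E$ from $v$ to $w$, with Lemma~\ref{l:projects} as the only algebraic input; the Murray-von Neumann equivalences are interpreted in a sufficiently large matrix amplification of $C^*(E)$ so that the direct sums on the right-hand side make sense as projections. Note that $v$ emits at least one edge (the first edge of any path from $v$ to $w$) and $E$ is row-finite, so $v$ is regular and Lemma~\ref{l:projects} applies at $v$.

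For the base case $n = 1$, there is an edge $e_0$ with $s_E(e_0) = v$ and $r_E(e_0) = w$, and Lemma~\ref{l:projects} gives
\begin{align*}
p_v \sim \sum_{e \in s_E^{-1}(v)} p_{r_E(e)} = p_w + \sum_{e \in s_E^{-1}(v) \setminus \{e_0\}} p_{r_E(e)};
\end{align*}
grouping the remaining terms by range vertex yields non-negative integer coefficients $m_u(v,w)$, with only finitely many nonzero by row-finiteness. For the inductive step $n \geq 2$, let $\alpha = e_1 e_2 \cdots e_n$ be a shortest path from $v$ to $w$ and set $v' := r_E(e_1)$. Minimality forces $v' \neq v$ (otherwise $e_2 \cdots e_n$ would shorten $\alpha$) and $v' \neq w$ (otherwise $n = 1$), so the inductive hypothesis applies at $(v',w)$ to yield $p_{v'} \sim p_w + \sum_u m_u(v',w)\, p_u$. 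Combining this with Lemma~\ref{l:projects} at $v$ (which splits off a single $p_{v'}$ from the other terms) and regrouping by target vertex produces a decomposition of the required form.

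For the ``moreover'' clause, the distinguished edge ($e_0$ in the base case, $e_1$ in the inductive step) has range $w$ or $v'$ respectively, both of which are distinct from $v$, so the distinguished edge is never a loop at $v$. Hence every loop $e$ at $v$ belongs to $s_E^{-1}(v)$ but not to $\{\text{distinguished edge}\}$, and each such $e$ contributes a term $p_{r_E(e)} = p_v$ to the residual sum, forcing $m_v(v,w)$ to be at least the number of loops at $v$ after the final regrouping. The main pitfall is the choice of path in the inductive step: insisting on a \emph{shortest} path both keeps the induction well-founded and guarantees that the distinguished edge is never a loop, which is essential for the stated lower bound on $m_v(v,w)$.
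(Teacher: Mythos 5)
Your proof is correct and follows essentially the same route as the paper: both arguments peel off one edge of a path at a time via Lemma~\ref{l:projects} and collect the leftover terms $\sum_{e\in s_E^{-1}(v_{i-1})\setminus\{e_i\}}p_{r_E(e)}$ into the coefficients $m_u(v,w)$, your induction on shortest-path length being just a repackaging of the paper's explicit telescoping along a chosen path. Your insistence on a \emph{shortest} path is a nice refinement: it guarantees the distinguished edge is never a loop at $v$, so the ``moreover'' bound on $m_v(v,w)$ is immediate, whereas the paper uses an arbitrary path and simply asserts this bound is ``clear from the construction'' (where it actually requires either a suitable choice of path or a short counting argument).
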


\begin{proof}
Let $e_1\cdots e_n$ denote a path in $E$ from $v$ to $w$, so that $e_1,\ldots,e_n\in E^1$ with $r_E(e_i)=s_E(e_{i+1})$ for all $i\in\{1,\ldots, n-1\}$, $s_E(e_1)=v$, and $r_E(e_n)=w$.
Define $v_i=r_E(e_i)$ for $i\in\{1,\ldots,n\}$, and $v_0=v$.
Then by Lemma~\ref{l:projects},
\begin{align*}
 p_v &\sim p_{v_1} + \sum_{e\in s^{-1}(v)\setminus\{e_1\}}p_{r_E(e)} \\
 &\sim p_{v_2} +  \sum_{e\in s^{-1}(v_1)\setminus\{e_2\}}p_{r_E(e)}  + \sum_{e\in s^{-1}(v_0)\setminus\{e_1\}}p_{r_E(e)} \\
 &\ \vdots \\
  &\sim p_w + \sum_{i=1}^{n-1} \left(  \sum_{e\in s^{-1}(v_{i-1}))\setminus\{e_i\}}p_{r_E(e)} \right) .
 \end{align*}
Define $(m_u(v,w))_{u\in E^0}$ as the non-negative integer scalars in the above linear combination of $(p_u)_{u\in E^0}$, i.e., such that
\[ \sum_{i=1}^{n-1} \left(  \sum_{e\in s^{-1}(v_{i-1}))\setminus\{e_i\}}p_{r_E(e)} \right) = \sum_{u\in E^0} m_u(v,w)p_u . \]
This defines $(m_u(v,w))_{u\in E^0}$ for any pair $v,w\in E^0$ for which there is a path from $v$ to $w$.

The last statement is clear from the construction of $m_{u} ( v, w )$.  
\end{proof}

\begin{theorem}[Theorem~3.5 of \cite{amp:nonstablekthy}] \label{amp:nonstablekthy}
Let $E$ be a row-finite graph and let $\{p_v,s_e \mid v\in E^0, e\in E^1 \}$ denote a universal Cuntz-Krieger $E$-family generating $C^*(E)$.
Any projection in $C^{*} (E) \otimes \mathbb{K}$ is Murray-von Neumann equivalent to a projection of the form $\sum_{u\in E^0} m_up_u$ with all but finitely many $m_u$ equal to zero.
\end{theorem}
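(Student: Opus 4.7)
The plan is to reduce the general case to a projection living in the AF core $\mathcal{F}_E$ of $C^*(E)$ (the fixed-point algebra of the gauge action $\gamma$), and then use the explicit AF structure of $\mathcal{F}_E$ to write the projection, up to Murray-von Neumann equivalence, as a sum of vertex projections.

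First I would recall the description of $\mathcal{F}_E$ as an inductive limit $\varinjlim_n \mathcal{F}_E^{(n)}$, where
\[
\mathcal{F}_E^{(n)} = \overline{\operatorname{span}}\{s_\alpha s_\beta^* : |\alpha| = |\beta| = n,\ r_E(\alpha) = r_E(\beta)\}
\]
is a direct sum, over $v \in E^0$, of finite-dimensional matrix algebras whose matrix units are $\{s_\alpha s_\beta^*\}$ indexed by pairs of length-$n$ paths ending at $v$ (row-finiteness ensures that these blocks are genuine finite matrix algebras). Every projection in $\mathcal{F}_E^{(n)} \otimes \mathsf M_N$ is Murray-von Neumann equivalent to a sum of the diagonal units $s_\alpha s_\alpha^*$, and by (CK1) each $s_\alpha s_\alpha^*$ is Murray-von Neumann equivalent to $s_\alpha^* s_\alpha = p_{r_E(\alpha)}$. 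Combining this with the standard AF approximation lemma that pushes any projection in $\mathcal{F}_E \otimes \mathbb{K}$ into some $\mathcal{F}_E^{(n)} \otimes \mathsf M_N$, I would conclude that every projection in $\mathcal{F}_E \otimes \mathbb{K}$ is Murray-von Neumann equivalent to $\sum_{u \in E^0} m_u p_u$, with only finitely many nonzero $m_u$ since only finitely many paths $\alpha$ appear in the representative.

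Second I would show that every projection $q$ in $C^*(E) \otimes \mathbb{K}$ is Murray-von Neumann equivalent to a projection in $\mathcal{F}_E \otimes \mathbb{K}$. The tool is the gauge action extended trivially to $C^*(E) \otimes \mathbb{K}$, together with its faithful conditional expectation $\Phi(x) = \int_{\mathbb{T}} \gamma_z(x)\, dz$ onto $\mathcal{F}_E \otimes \mathbb{K}$. The idea is to approximate $q$ in norm by a self-adjoint element lying in the algebraic span of elementary tensors $s_\alpha s_\beta^* \otimes e_{ij}$, use functional calculus to obtain a nearby projection whose $\Phi$-image is still close to it, and then transport $q$ onto a Murray-von Neumann equivalent projection in $\mathcal{F}_E \otimes \mathbb{K}$ by means of a partial isometry built from the generators appearing in the approximation. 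Alternatively one can appeal to a general principle, in the spirit of Cuntz's original argument for $\mathcal{O}_n$, that in gauge-invariant situations with an AF fixed-point algebra every projection in the crossed product or ambient algebra is stably unitarily equivalent to one in the fixed-point algebra.

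The hard part will be precisely this gauge-averaging step: producing an explicit partial isometry that conjugates $q$ into $\mathcal{F}_E \otimes \mathbb{K}$ requires care, since in general $\Phi(q)$ need not itself be a projection, and one must control both the defect and the support projection of the approximating partial isometry. Once past this hurdle, the reduction inside $\mathcal{F}_E \otimes \mathbb{K}$ to a sum of vertex projections is essentially bookkeeping made available by Lemma~\ref{l:projects} and the Cuntz-Krieger relations, and row-finiteness of $E$ ensures that all sums appearing are finite.
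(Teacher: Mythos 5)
You should first note that the paper contains no proof of this statement at all: it is imported verbatim as Theorem~3.5 of \cite{amp:nonstablekthy}, and the argument given there is essentially algebraic (graph monoids, direct limits over finite complete subgraphs, comparison with the Leavitt path algebra), not a reduction to the gauge-invariant core. So your sketch is not a reconstruction of the cited proof but a proposed alternative, Cuntz-style route. That route is in principle viable, but as written it has a genuine gap exactly where you flag ``the hard part.''

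The gap is the reduction of an arbitrary projection $q\in C^*(E)\otimes\mathbb{K}$ to one in $\mathcal{F}_E\otimes\mathbb{K}$. The mechanism you describe --- approximate $q$ by a self-adjoint element of the algebraic span, apply the expectation $\Phi$, and use functional calculus --- does not work: $\Phi$ is contractive but does not approximately fix projections, so for a projection $q$ with nonzero components in the nontrivial spectral subspaces of the gauge action, $\|q-\Phi(q)\|$ need not be small and $\Phi(q)$ is merely a positive contraction whose spectrum can fill $[0,1]$; there is no ``nearby projection'' to extract. The ``general principle'' you invoke as a fallback --- that for a circle action with AF fixed-point algebra every projection is stably equivalent to one in the fixed-point algebra --- is not a theorem, and for the gauge action it is essentially the surjectivity statement you are trying to prove, so appealing to it is circular. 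The working version of this step (Cuntz's lemma for $\mathcal{O}_n$ and its graph-algebra adaptations) conjugates a norm-approximant of $q$ by an explicit partial isometry of the form $\sum_{|\mu|=k}s_{\mu\sigma(\mu)}s_{\mu}^*$ that shifts the $\mathbb{Z}$-grading into degree zero; constructing the extensions $\sigma(\mu)$ requires that every vertex reached by a length-$k$ path emit an edge, which fails at sinks, so one must first split off the part of $q$ supported on the ideal generated by the sinks and treat it separately. None of this appears in your sketch. Two smaller inaccuracies in the part you do carry out: row-finiteness bounds the edges \emph{leaving} each vertex, not those entering, so the blocks of $\mathcal{F}_E^{(n)}$ can be copies of $\mathbb{K}$ rather than finite matrix algebras (harmless, but your parenthetical justification is wrong); and the filtration $\mathcal{F}_E^{(n)}=\clspan\{s_\alpha s_\beta^*:|\alpha|=|\beta|=n\}$ is not increasing when $E$ has sinks --- one must also retain the shorter paths ending at sinks.
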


\begin{lemma}\label{l:supportloops}
Suppose $E$ is a row-finite graph in which every vertex is the base point of at least one cycle of length one.  Then every hereditary subset in $E^0$ is saturated.
\end{lemma}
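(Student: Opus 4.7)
The plan is a one-line observation: the hypothesis that every vertex carries a loop forces every regular vertex $v$ to lie in $r_E(s_E^{-1}(v))$, which makes the saturation condition automatic.

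More precisely, let $H \subseteq E^0$ be hereditary (the hereditary hypothesis is not even used), and let $v$ be any regular vertex with $r_E(s_E^{-1}(v)) \subseteq H$. By hypothesis, there exists an edge $e \in E^1$ with $s_E(e) = r_E(e) = v$; this $e$ lies in $s_E^{-1}(v)$, so $v = r_E(e) \in r_E(s_E^{-1}(v)) \subseteq H$. Hence $H$ is saturated.

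There is no real obstacle here; the statement is essentially immediate from the definitions once one notices that the loop at $v$ places $v$ itself among the ranges of edges emitted by $v$. The lemma is simply packaging this observation for later use (presumably so that, in graphs built to have loops at every vertex, gauge-invariant ideals can be parametrized just by hereditary subsets without having to track saturation separately).
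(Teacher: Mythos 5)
Your argument is correct and is essentially identical to the paper's proof: both observe that the loop at $v$ places $v$ in $r_E(s_E^{-1}(v))$, so the saturation implication holds trivially. (The paper additionally remarks that every vertex is regular under these hypotheses, but since the definition of saturated only quantifies over regular vertices, your version loses nothing by omitting this.)
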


\begin{proof}
Let $H$ be a hereditary subset in $E^{0}$.  Since every vertex in $E$ is the base point of at least one cycle of length one, $E$ is a graph with no sinks.  This fact and the fact that $E$ is row-finite imply that every vertex in $E$ is a regular vertex.   To show that $H$ is saturated we must show that $r_{E} ( s_{E}^{-1} ( v ) ) \subseteq H$ implies $v \in H$ for all $v \in E^{0}$.  Let $v$ be a vertex in $E$ such that $r_{E} ( s_{E}^{-1} ( v ) ) \subseteq H$.  By assumption there exists $e \in s_{E}^{-1} ( v )$ such that $v = r_{E} (e) = s_{E} (e)$.  Hence, $v \in r_{E} ( s_{E}^{-1} (v) )$ which implies that $v \in H$.
\end{proof}

\begin{lemma}\label{l:fullprojections}
Let $E$ be a finite graph and let $\{p_v,s_e \mid v\in E^0, e\in E^1 \}$ denote a universal Cuntz-Krieger $E$-family generating $C^*(E)$.   Assume that $E$ has no sinks and no sources, and every vertex of $E$ is a base point of at least one cycle of length one.

Let $p$ be a norm-full projection in $C^*(E)\otimes\mathbb K$.  Then there exists a family $(m_u)_{u\in E^0}$ of integers satisfying
\[ p\sim \sum_{u\in E^0} m_up_u  \]
and $m_u\geq 1$ for all $u\in E^0$
\end{lemma}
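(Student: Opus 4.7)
The plan is to combine Theorem~\ref{amp:nonstablekthy} with an iterative application of Lemma~\ref{basiclemma}, using the self-loop hypothesis both to translate norm-fullness of $p$ into a reachability statement and to guarantee that no positive coefficient is destroyed by a substitution.

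First, by Theorem~\ref{amp:nonstablekthy} there exist non-negative integers $(n_u)_{u\in E^0}$, almost all zero, with $p\sim\sum_{u\in E^0}n_u p_u$. Set $S=\{u\in E^0\mid n_u\geq 1\}$. Since Murray--von Neumann equivalent projections generate the same closed two-sided ideal, the ideal of $C^*(E)\otimes\mathbb K$ generated by $p$ coincides with $I_S\otimes\mathbb K$, where $I_S\subseteq C^*(E)$ is the ideal generated by $\{p_u\mid u\in S\}$; fullness of $p$ therefore forces $I_S=C^*(E)$. Because each $p_u$ is gauge-fixed, $I_S$ is gauge invariant, so by the standard correspondence for row-finite graphs we may write $I_S=I_H$ for a saturated hereditary subset $H\subseteq E^0$ containing $S$. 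By Lemma~\ref{l:supportloops}, every hereditary subset of $E^0$ is automatically saturated, so $H$ is simply the hereditary closure of $S$; thus $H=E^0$ means that for every $w\in E^0$ there exists $v\in S$ with $v\geq w$.

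To promote every vanishing coefficient to a positive one, I would iterate the following step. For each $w\in E^0$ with $n_w=0$, choose some $v\in S$ with $v\geq w$ (necessarily $v\neq w$) and apply Lemma~\ref{basiclemma} to obtain
\[ p_v\sim p_w+\sum_{u\in E^0}m_u(v,w)p_u \]
with $m_v(v,w)\geq 1$, where the strict positivity of $m_v(v,w)$ comes from the assumed self-loop at $v$. Substituting this equivalence for one copy of $p_v$ in $\sum_{u\in E^0}n_u p_u$ produces an equivalent sum in which the coefficient of $p_v$ changes from $n_v$ to $n_v-1+m_v(v,w)\geq n_v\geq 1$, the coefficient of $p_w$ jumps from $0$ to $1+m_w(v,w)\geq 1$, and every other coefficient can only increase. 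Since $E^0$ is finite, iterating over the vertices initially carrying coefficient zero terminates and yields the desired representation $p\sim\sum_{u\in E^0}m_u p_u$ with $m_u\geq 1$ for all $u\in E^0$.

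The one genuine subtlety is the translation from norm-fullness of $p$ to the reachability statement for $S$: without the self-loop assumption the hereditary closure of $S$ could fail to be saturated, so fullness would only control the saturated hereditary closure and could leave some vertices unreachable from $S$; likewise, the positivity of $m_v(v,w)$ in Lemma~\ref{basiclemma} relies on $v$ carrying a self-loop, without which a substitution could send $n_v-1+m_v(v,w)$ to zero and undo earlier progress.
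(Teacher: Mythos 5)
Your argument is correct and follows essentially the same route as the paper: apply Theorem~\ref{amp:nonstablekthy} to write $p\sim\sum_u n_u p_u$, use Lemma~\ref{l:supportloops} to convert norm-fullness into reachability of every vertex from the support $S$, and then repeatedly substitute via Lemma~\ref{basiclemma}, with the self-loop hypothesis guaranteeing $m_v(v,w)\geq 1$ so that no coefficient drops to zero. The only cosmetic difference is that the paper tracks the strictly growing supports $S_0\subsetneq S_1\subsetneq\cdots$ explicitly, while you fix the original $S$ throughout; both terminate for the same reason.
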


\begin{proof}
By Theorem~\ref{amp:nonstablekthy}, there exists a family $(n_u)_{u\in E^0}$ of non-negative integers satisfying
\[ p\sim\sum_{u\in E^0}n_up_u .\] 
Set $S_{0} = \{ u \in E^{0} \mid n_{u} \neq 0 \}$ and let $H_{0}$ be the smallest hereditary subset of $E^{0}$ that contains $S_{0}$.  By Lemma~\ref{l:supportloops}, $H_{0}$ is saturated.  Set $q = \sum_{ v \in H_{0} } p_{v} \in I_{H_{0}}$.  Note that the ideal generated by $q \otimes e_{11}$ is equal to the ideal generated by $\sum_{u\in E^0}n_up_u$, where $\{ e_{ij} \}_{i,j}$ is a system of matrix units for $\mathbb{K}$.  Since $ p\sim\sum_{u\in E^0}n_up_u$, we have that the ideal generated by $q \otimes e_{11}$ is equal to the ideal generated by $p$.  Thus, $q \otimes e_{11}$ is a norm-full projection in $C^{*} (E) \otimes \mathbb{K}$ which implies that $q$ is a norm-full projection in $C^{*} (E)$.  Hence, $I_{H_{0}} = C^{*} (E)$ which implies that $H_{0} = E^{0}$.  Therefore, for every $w \in E^{0}$, there exists $v \in S_0$ such that $v \geq w$.

Set $E^{0} \setminus S_{0} = \{ w_{0} , w_{1} , \dots, w_{m} \}$.  Let $v \in S_0$ such that $v \geq w_{0}$.  By Lemma~\ref{basiclemma}, 
\begin{align*}
p_v\sim p_{w_{0}} + \sum_{u\in E^0} m_u(v,w_{0})p_u 
\end{align*}
where $m_{u} ( v, w_{0} ) \geq 0$ and 
\begin{align*}
m_{v} ( v, w_{0} ) \geq | \{ e \in E^{1} \mid s_{E} (e) = r_{E} (e) = v \} | \geq 1.
\end{align*}
Therefore,
\begin{align*}
p \sim \sum_{ u \in E^{0} } n_{u}' p_{u}
\end{align*}
where $n_{u}' \geq 0$ for all $u \in E^{0}$.  Moreover,
\begin{align*}
S_{0} \subsetneq \{ u \in E^{0} \mid n_{u} ' \neq 0 \} = S_{1}
\end{align*}
since $n_{w_{0}}' \neq 0$ but $n_{w_{0}} = 0$.  Therefore, $| E^{0} \setminus S_{1} | < | E^{0} \setminus S_{0} |$.  

Let $H_{1}$ be the smallest hereditary subset of $E^{0}$ that contains $S_{1}$.  Note that $E^{0} = H_{0} \subseteq H_{1} \subseteq E^{0}$.  Hence, $H_{1} = E^{0}$.  Hence, for each $w \in E^{0}$, there exists a $v \in S_{1}$ such that $v \geq w$.  Therefore, we may continue this process to get   
a family $(m_u)_{u\in E^0}$ of non-negative integers satisfying
\[ p\sim \sum_{u\in E^0} m_up_u  \]
and $m_u\geq 1$ for all $u\in E^0$.
\end{proof}

\begin{proposition}\label{p:full-corner-CK-algebras}
Let $E$ be a finite graph with no sinks and no sources, and 
assume that
every vertex of $E$ is a base point of at least one cycle of length one.  Let $p$ be a norm-full projection in $C^{*} (E) \otimes \mathbb{K}$.  Then there exists a finite graph $F$ that has no sinks and no sources such that $C^*(F)\cong p ( C^{*} (E) \otimes \mathbb{K} ) p$.
\end{proposition}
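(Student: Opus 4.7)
The plan is to reduce to Theorem~\ref{t:fullcorners-stablized} by moving $p$, up to Murray--von Neumann equivalence, to a projection of the form $p_T$ inside the stabilisation $C^{*}(SE)$. Since the hypothesis that every vertex of $E$ is the base point of a loop of length one is exactly what Lemma~\ref{l:fullprojections} requires, that lemma applies to $p$ and produces positive integers $(m_u)_{u\in E^{0}}$ for which
\[
p\sim q := \sum_{u\in E^{0}} m_u\, p_u
\]
inside $C^{*}(E)\otimes\mathbb K$. As Murray--von Neumann equivalent projections have isomorphic corners, it suffices to exhibit $q(C^{*}(E)\otimes\mathbb K)q$ as the $C^{*}$-algebra of a finite graph with no sinks and no sources.

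Next, I would invoke the standard identification $C^{*}(SE)\cong C^{*}(E)\otimes\mathbb K$, which sends the original vertex projections of $C^{*}(E)$ to the same-named vertex projections of $C^{*}(SE)$. Writing $u^{(1)},u^{(2)},\dots$ for the vertices of the infinite head attached to $u\in E^{0}$, put
\[
T:=E^{0}\cup\bigl\{u^{(k)}\bigm|u\in E^{0},\ 1\leq k\leq m_u-1\bigr\}.
\]
Then $T$ is finite, contains $E^{0}$, and is hereditary in $(SE)^{0}$, because the only vertices reachable from $u^{(k)}$ in $SE$ are $u^{(k-1)},\dots,u^{(1)},u$ together with vertices of $E$ reachable from $u$, all of which already belong to $T$. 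For each such $u^{(k)}$ the unique path from $u^{(k)}$ to $u$ passes through regular vertices emitting only one edge, so (CK2)--(CK3) applied iteratively show that the associated path partial isometry yields $p_{u^{(k)}}\sim p_u$ in $C^{*}(SE)$. Since the $p_{u^{(k)}}$ (with $0\leq k\leq m_u-1$ and $u$ varying) are pairwise orthogonal, orthogonal additivity of Murray--von Neumann equivalence gives
\[
p_T = \sum_{u\in E^{0}}\sum_{k=0}^{m_u-1}p_{u^{(k)}}\sim q.
\]

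Finally, Theorem~\ref{t:fullcorners-stablized} applied to $T$ yields that $p_T$ is a full projection in $C^{*}(SE)$, that $p_TC^{*}(SE)p_T\cong C^{*}(F_0)$ for some finite subgraph $F_0$ of $SE$, and, since $E$ is finite with no sinks and no sources (so $C^{*}(E)$ is a Cuntz--Krieger algebra by Definition~\ref{d:ckalgebras}), that $p_TC^{*}(SE)p_T$ is a Cuntz--Krieger algebra. By Definition~\ref{d:ckalgebras} there is therefore a finite graph $F$ with no sinks and no sources with $C^{*}(F)\cong p_TC^{*}(SE)p_T$. Concatenating the isomorphisms
\[
p(C^{*}(E)\otimes\mathbb K)p\cong q(C^{*}(E)\otimes\mathbb K)q\cong p_TC^{*}(SE)p_T\cong C^{*}(F)
\]
completes the proof. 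The main place to be careful is the middle equivalence $q\sim p_T$ across the identification $C^{*}(SE)\cong C^{*}(E)\otimes\mathbb K$; the construction of $T$, verification of heredity, and invocation of Theorem~\ref{t:fullcorners-stablized} are then routine.
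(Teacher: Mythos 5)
Your argument is correct and follows the same overall strategy as the paper: reduce via Lemma~\ref{l:fullprojections} to a projection of the form $\sum_{u} m_u p_u$ with all $m_u\geq 1$, transport it to the stabilization $C^{*}(SE)$, identify it up to Murray--von Neumann equivalence with a projection $p_T$ for a finite hereditary $T\supseteq E^{0}$, and finish with Theorems~\ref{t:fullcorners-stablized} and~\ref{t:phantom-ck-algebras}. The one step you handle differently is the passage from $\sum_{u} m_u p_u$ to $p_T$: the paper records only the $K_0$-level statement $K_0(\phi)([p_v\otimes e_{11}])=[p_v]$ for the isomorphism $\phi\colon C^{*}(E)\otimes\mathbb K\to C^{*}(SE)$ of Abrams--Tomforde and then invokes weak cancellation (Corollary~7.2 of \cite{amp:nonstablekthy}) to upgrade equality of $K_0$-classes of full projections to Murray--von Neumann equivalence with some $p_T$, whereas you construct $T$ explicitly from the multiplicities $m_u$ and prove $\sum_{u} m_u p_u\sim p_T$ directly, using that each head vertex $u^{(k)}$ emits a single edge (so $p_{u^{(k)}}\sim p_{u^{(k-1)}}\sim\cdots\sim p_u$) together with orthogonal additivity of Murray--von Neumann equivalence. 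Your route is more elementary in that it bypasses weak cancellation, at the cost of needing the slightly stronger --- but true, and available from the proof of Proposition~9.8 of \cite{gamt:isomorita} --- fact that $\phi$ carries $p_v\otimes e_{11}$ to a projection Murray--von Neumann equivalent to $p_v$, not merely one with the same $K_0$-class. Both arguments are sound, and your verification that $T$ is finite and hereditary is correct since every vertex reachable from $u^{(k)}$ is either a lower head vertex already placed in $T$ or lies in $E^{0}\subseteq T$.
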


\begin{proof}
Let $SE$ be the stabilization of $E$, as defined in Definition~\ref{d:stabilization}.  Let $\{ e_{ij} \}_{ i ,j}$ be a system of matrix units for $\mathbb{K}$.  By Proposition~9.8 of \cite{gamt:isomorita} and its proof, there exists an isomorphism $\phi \colon  C^{*} (E) \otimes \mathbb{K} \rightarrow C^{*} ( SE ) $ such that 
\begin{align*}
K_{0} ( \phi ) ( [ p_{v}\otimes e_{11} ] ) = [ p_{v} ]
\end{align*}
for all $v \in E^{0}$.  Let $p$ be a norm-full projection in $C^{*} (E) \otimes \mathbb{K}$.  By Lemma~\ref{l:fullprojections}, $p$ is Murray-von Neumann equivalent to $\sum_{u\in E^0} m_up_u$ with $m_u\geq 1$ for all $u\in E^0$.
Therefore, since $C^*(SE)$ has weak cancellation by Corollary~7.2 of~\cite{amp:nonstablekthy},
$\phi(p)$ is Murray-von Neumann equivalent to $p_{T} \in C^{*} (SE)$ such that $T$ is a finite, hereditary subset of $(SE)^{0}$ with $E^{0} \subseteq T$.  By Theorems~\ref{t:fullcorners-stablized} and~\ref{t:phantom-ck-algebras}, $p_{T} C^{*} (SE) p_{T} \cong C^{*} (F)$ for some finite graph $F$ with no sinks and no sources.  Note that $p ( C^{*} (E) \otimes \mathbb{K} ) p \cong \phi ( p ) C^{*} (SE) \phi ( p ) \cong p_{T} C^{*} (SE) p_{T}$.  Therefore, $p ( C^{*} (E) \otimes \mathbb{K} ) p \cong C^{*} (F)$.
\end{proof}

The following theorem  answers a question asked by George A.~Elliott at the NordForsk Closing Conference at the Faroe Islands, May 2012.

\begin{theorem}\label{t:morita-ck-algebras}
Let $A$ be a unital $C^*$-algebra.
\begin{itemize}
\item[(1)] If $A$ is stably isomorphic to a Cuntz-Krieger algebra, then $A$ is isomorphic to a Cuntz-Krieger algebra.

\item[(2)] Let $A$ be a unital, nuclear, separable $C^{*}$-algebra with finitely many ideals and let $X = \mathrm{Prim} ( A )$.  If $A \otimes \mathcal{O}_{\infty}$ is $KK_{X}$-equivalent to a Cuntz-Krieger algebra with real rank zero and primitive ideal space $X$, then $A \otimes \mathcal{O}_{\infty}$ is isomorphic to a Cuntz-Krieger algebra of real rank zero.
\end{itemize}
\end{theorem}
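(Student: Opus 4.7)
The plan for part~(1) is to realize $A$ as a norm-full corner of a stabilized Cuntz-Krieger algebra and invoke Proposition~\ref{p:full-corner-CK-algebras}. Writing the assumed stable isomorphism as $\theta\colon A\otimes\mathbb K\to B\otimes\mathbb K$ with $B$ a Cuntz-Krieger algebra, the projection $q:=\theta(1_A\otimes e_{11})$ is norm-full in $B\otimes\mathbb K$ and satisfies $A\cong q(B\otimes\mathbb K)q$; by Theorem~\ref{t:phantom-ck-algebras} one may further take $B=C^*(E_0)$ for a finite graph $E_0$ with no sinks and no sources. The crux of the argument is then to replace $E_0$ by a finite graph $E$ having no sinks, no sources, a loop at every vertex, and $C^*(E)\otimes\mathbb K\cong C^*(E_0)\otimes\mathbb K$. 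I would carry this out by a graph move---for instance an in-splitting or expansion that attaches a cycle of length one at each loopless vertex---known to preserve stable isomorphism of the associated graph $C^*$-algebra. Transporting $q$ along the resulting stable isomorphism yields a norm-full projection $p\in C^*(E)\otimes\mathbb K$ with $p(C^*(E)\otimes\mathbb K)p\cong A$, and Proposition~\ref{p:full-corner-CK-algebras} then produces a finite graph $F$ without sinks or sources such that $A\cong C^*(F)$, a Cuntz-Krieger algebra by Definition~\ref{d:ckalgebras}.

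For part~(2), the plan is to reduce to part~(1) via Kirchberg-Phillips classification over the primitive ideal space $X$. Since $A\otimes\mathcal{O}_\infty$ is separable, nuclear, $\mathcal{O}_\infty$-absorbing and purely infinite with primitive ideal space $X$, and is $KK_X$-equivalent to the real-rank-zero Cuntz-Krieger algebra $B$ (which is likewise $\mathcal{O}_\infty$-absorbing), the classification theorem for $\mathcal{O}_\infty$-absorbing $C^*$-algebras over $X$ produces an $X$-equivariant stable isomorphism $(A\otimes\mathcal{O}_\infty)\otimes\mathbb K\cong B\otimes\mathbb K$. Because $A\otimes\mathcal{O}_\infty$ is unital, part~(1) gives that $A\otimes\mathcal{O}_\infty$ is a Cuntz-Krieger algebra, and since real rank zero is preserved under stable isomorphism for $\sigma$-unital $C^*$-algebras, it is of real rank zero as claimed.

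I expect the main obstacle to be the graph-theoretic step in part~(1): producing, from a general finite graph with no sinks or sources, a stably Morita equivalent graph with a loop at every vertex, so that the hypotheses of Proposition~\ref{p:full-corner-CK-algebras} are met. The remainder is an assembly of Proposition~\ref{p:full-corner-CK-algebras}, the standard corner identification of a unital $C^*$-algebra inside its stabilization, and, for part~(2), the $KK_X$-classification of $\mathcal{O}_\infty$-absorbing $C^*$-algebras.
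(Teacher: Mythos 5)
Your overall architecture coincides with the paper's: realize $A$ as a full corner of $C^*(E)\otimes\mathbb K$ for a finite graph $E$ with no sinks, no sources, and a cycle of length one at every vertex, then invoke Proposition~\ref{p:full-corner-CK-algebras}; and for part~(2), convert the $KK_X$-equivalence into a stable isomorphism via Kirchberg's classification (the paper cites Folgerung~4.3 of \cite{kirchberg}) and quote part~(1), noting that real rank zero passes across stable isomorphism. Part~(2) is therefore in order, and the corner identifications in part~(1) match the paper's.

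The one genuine gap is exactly the step you flagged as the crux, and the fix you sketch would fail. Attaching a cycle of length one at a loopless vertex is \emph{not} a stable-isomorphism-preserving operation: adding a loop at $v$ changes the diagonal entry of the vertex matrix $A_E$ and hence can change $K_0(C^*(E))\cong\coker(A_E^t-I)$ --- for a single vertex, passing from two loops to three turns $\mathcal O_2$ (with $K_0=0$) into $\mathcal O_3$ (with $K_0=\Z/2$). Moreover, neither in-splitting, out-splitting, nor expansion performs such an attachment, so there is no ``known move'' of the kind you describe. What the paper does is the opposite: by Theorem~5.2 of \cite{as:geometric-class}, \emph{collapsing} a regular vertex that is not the base point of a cycle of length one (deleting the vertex and replacing each two-edge path through it by a single edge) preserves the stable isomorphism class. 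Since the graph is finite with no sinks and no sources, each collapse removes one loopless vertex without creating new ones, so finitely many collapses yield a graph $E$ in which every vertex supports a loop; the remainder of your argument then goes through verbatim. So the skeleton is correct, but the decisive graph-theoretic input is S{\o}rensen's collapse move, not a loop-attaching move.
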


\begin{proof}
We first prove (1).  Let $B$ be a Cuntz-Krieger algebra such that $A \otimes \mathbb{K} \cong B \otimes \mathbb{K}$.  Note that $B = C^{*} ( F )$ such that $F$ is a finite graph with no sinks and no sources.  By Theorem~5.2 of \cite{as:geometric-class}, collapsing a regular vertex that is not a base point of a cycle of length one preserves stable isomorphism classes.  Therefore, since $F$ is a finite graph with no sinks and no sources, we can apply Theorem~5.2 of \cite{as:geometric-class} a finite number of times to get a finite graph $E$ with no sinks and no sources, and every vertex of $E$ is a base point of at least one cycle of length one, such that $C^{*} (F) \otimes \mathbb{K} \cong C^{*} ( E ) \otimes \mathbb{K}$.  Hence, $A \otimes \mathbb{K} \cong C^{*} (E) \otimes \mathbb{K}$.  Let $\phi\colon A \otimes \mathbb{K} \to C^{*} (E) \otimes \mathbb{K}$ be an isomorphism.

Let $\{ e_{ij} \}_{ i ,j }$ be a system of matrix units for $\mathbb{K}$.  Since $1_{A} \otimes e_{11}$ is a norm-full projection in $A \otimes \mathbb{K}$, $p = \phi ( 1_{A} \otimes e_{11} )$ is a norm-full projection in $C^{*} (E) \otimes \mathbb{K}$.  By Proposition~\ref{p:full-corner-CK-algebras}, $p ( C^{*} (E) \otimes \mathbb{K} ) p$ is isomorphic to a Cuntz-Krieger algebra.  Note $( 1_{A} \otimes e_{11} )( A \otimes \mathbb{K} ) ( 1_{A} \otimes e_{11} ) \cong p ( C^{*} (E) \otimes \mathbb{K} ) p$ and $A \cong  ( 1_{A} \otimes e_{11} )( A \otimes \mathbb{K} ) ( 1_{A} \otimes e_{11} )$.  Therefore, $A$ is isomorphic to a Cuntz-Krieger algebra.

\medskip

We will now use (1) to prove (2).  Let $B$ be a Cuntz-Krieger algebra with real rank zero such that $A \otimes \mathcal{O}_{\infty}$ is $KK_{X}$-equivalent to $B$ and $\mathrm{Prim}(B)\cong X$.  By  Folgerung 4.3 of \cite{kirchberg}, $A \otimes \mathcal{O}_{\infty} \otimes \mathbb{K} \cong B \otimes \mathbb{K}$.  Therefore, $A \otimes \mathcal{O}_{\infty}$ is a unital $C^{*}$-algebra stably isomorphic to a Cuntz-Krieger algebra with real rank zero.  By (1), we have that $A \otimes \mathcal{O}_{\infty}$ is isomorphic to a Cuntz-Krieger algebra.  Since $A \otimes \mathcal{O}_{\infty}$ is stably isomorphic to a $C^{*}$-algebra with real rank zero, $A \otimes \mathcal{O}_{\infty}$ has real rank zero.  Therefore, $A \otimes \mathcal{O}_{\infty}$ is isomorphic to a Cuntz-Krieger algebra with real rank zero.
\end{proof}

\begin{corollary} \label{cor:matrices}
Let $A$ be a $C^{*}$-algebra.  Then the following are equivalent.
\begin{itemize}
\item[(1)] $A$ is a Cuntz-Krieger algebra.

\item[(2)] $\mathsf M_{n}(A)$ is a Cuntz-Krieger algebra for all $n\in\mathbb \N$.

\item[(3)] $\mathsf{M}_{n} (A)$ is a Cuntz-Krieger algebra for some $n \in \N$.
\end{itemize}
\end{corollary}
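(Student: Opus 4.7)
The plan is to deduce all three equivalences directly from Theorem~\ref{t:morita-ck-algebras}(1), using the elementary observation that $\mathsf M_n(A) \otimes \mathbb K \cong A \otimes \mathbb K$ for every $C^{*}$-algebra $A$ and every $n \in \mathbb N$, and that a $C^{*}$-algebra $A$ is unital if and only if $\mathsf M_n(A)$ is unital.

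For (1) $\Rightarrow$ (2), assume $A$ is a Cuntz-Krieger algebra. Then $A$ is unital, so $\mathsf M_n(A)$ is unital for every $n$. Moreover $\mathsf M_n(A) \otimes \mathbb K \cong A \otimes \mathbb K$, so $\mathsf M_n(A)$ is a unital $C^{*}$-algebra stably isomorphic to the Cuntz-Krieger algebra $A$. Theorem~\ref{t:morita-ck-algebras}(1) then yields that $\mathsf M_n(A)$ is a Cuntz-Krieger algebra.

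The implication (2) $\Rightarrow$ (3) is trivial (take $n=1$, or any $n$).

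For (3) $\Rightarrow$ (1), assume $\mathsf M_n(A)$ is a Cuntz-Krieger algebra for some $n$. Since Cuntz-Krieger algebras are unital, $\mathsf M_n(A)$ is unital, and therefore so is $A$. Again using $A \otimes \mathbb K \cong \mathsf M_n(A) \otimes \mathbb K$, we see that $A$ is a unital $C^{*}$-algebra stably isomorphic to the Cuntz-Krieger algebra $\mathsf M_n(A)$, and a second application of Theorem~\ref{t:morita-ck-algebras}(1) gives that $A$ is a Cuntz-Krieger algebra.

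There is essentially no obstacle here beyond invoking Theorem~\ref{t:morita-ck-algebras}(1); the content of the corollary is precisely that matrix amplification preserves (and reflects, at the level of unitality) the property of being a Cuntz-Krieger algebra, which is immediate from the stable-isomorphism statement already established.
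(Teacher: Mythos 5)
Your proposal is correct and follows essentially the same route as the paper: both directions reduce to Theorem~\ref{t:morita-ck-algebras}(1) via the isomorphism $\mathsf M_n(A)\otimes\mathbb K\cong A\otimes\mathbb K$, with the only extra point being that unitality of $\mathsf M_n(A)$ forces unitality of $A$ (the paper spells this out by checking that the $(1,1)$-entry of the unit of $\mathsf M_n(A)$ is a unit for $A$). No differences worth noting.
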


\begin{proof}
(1) implies (2) follows from Theorem~\ref{t:morita-ck-algebras}.  (2) implies (3) is obvious.  Suppose $\mathsf{M}_{n} (A)$ is a Cuntz-Krieger algebra for some $n \in \N$.  In particular, $\mathsf{M}_{n} (A)$ is a unital $C^{*}$-algebra with $1_{\mathsf{M}_{n} (A)} = [ x_{ij} ]$.  A computation shows that $x_{11}$ is a multiplicative identity for $A$.  Therefore, $A$ is a unital $C^{*}$-algebra.  Since $A \otimes \mathbb{K} \cong \mathsf{M}_{n} ( A ) \otimes \mathbb{K}$ and since $\mathsf{M}_{n} (A)$ is a Cuntz-Krieger algebra, by Theorem~\ref{t:morita-ck-algebras}, $A$ is a Cuntz-Krieger algebra.
\end{proof}

\begin{corollary} \label{cor}
Let $A$ be a Cuntz-Krieger algebra.  
\begin{itemize}
\item[(1)] If $p$ is a nonzero projection in $A$, then $p A p$ is isomorphic to a Cuntz-Krieger algebra.

\item[(2)] If $p$ is a nonzero projection in $A \otimes \mathbb{K}$, then $p( A \otimes \mathbb{K} ) p$ is isomorphic to a Cuntz-Krieger algebra.
\end{itemize}
\end{corollary}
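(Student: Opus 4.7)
The plan is to invoke Theorem~\ref{t:morita-ck-algebras}(1). Each corner $pAp$ or $p(A\otimes\mathbb K)p$ is unital with unit $p$, so it suffices to show stable isomorphism with a Cuntz-Krieger algebra.

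For (1), write $A = C^*(E)$ with $E$ finite, no sinks or sources. Since $p$ is a full projection in the closed ideal $I := \overline{ApA}$ it generates, Brown's theorem gives $pAp\otimes\mathbb K \cong I\otimes\mathbb K$. The crucial observation is that $I$ is gauge-invariant: the map $z\mapsto \gamma_z(p)$ is a continuous path of projections in $A$, projections joined by a continuous path are Murray--von Neumann equivalent (chain together finitely many close projections, using that $\|p-q\|<1$ implies $p\sim q$), and Murray--von Neumann equivalent projections generate the same closed ideal (if $v^*v=p$ and $vv^*=q$, then $q=v\cdot p\cdot (pv^*)\in ApA$), so $\gamma_z(p)\in I$ and hence $\gamma_z(I)=I$ for every $z\in\mathbb T$. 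By the gauge-invariant ideal correspondence (Theorem~4.1 of \cite{BPRS00}), $I = I_H$ for some saturated hereditary $H\subseteq E^0$. The projection $q_H := \sum_{v\in H} p_v$ is full in $I_H$, and Lemma~\ref{l:ck-subgraph} applied to the CK-subgraph inclusion $E|_H\hookrightarrow E$ (a CK-subgraph because $H$ is hereditary, so no out-edges from $H$ are lost) identifies the full corner $q_H I_H q_H = q_H A q_H$ with $C^*(E|_H)$. Because $E$ has no sinks and $H$ is hereditary, every vertex of $E|_H$ still emits an edge inside $E|_H$, so $E|_H$ is a finite graph with no sinks; Theorem~\ref{t:phantom-ck-algebras} then gives that $C^*(E|_H)$ is a Cuntz-Krieger algebra. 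Thus $pAp$ is stably isomorphic to a Cuntz-Krieger algebra, and Theorem~\ref{t:morita-ck-algebras}(1) concludes (1).

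For (2), I reduce to (1) via Corollary~\ref{cor:matrices}. Using the approximate unit $q_n = 1_A\otimes\sum_{i\le n} e_{ii}$ of $A\otimes\mathbb K$, the element $q_npq_n\in M_n(A)=q_n(A\otimes\mathbb K)q_n$ converges to $p$; for $n$ sufficiently large a functional-calculus perturbation yields a projection $p'\in M_n(A)$ close to $p$, and close projections are Murray--von Neumann equivalent, so $p\sim p'$ in $A\otimes\mathbb K$. Consequently $p(A\otimes\mathbb K)p \cong p'(A\otimes\mathbb K)p' = p' M_n(A) p'$, the last equality because $p'\le q_n$. By Corollary~\ref{cor:matrices} the algebra $M_n(A)$ is a Cuntz-Krieger algebra, and applying (1) to $M_n(A)$ with the projection $p'$ shows that $p'M_n(A)p'$ is a Cuntz-Krieger algebra. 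The main obstacle is establishing the gauge-invariance of $I$ in (1); once that is in place the remaining steps are routine given the paper's earlier results, in particular the CK-subgraph identification via Lemma~\ref{l:ck-subgraph} and the ``no sinks'' characterization in Theorem~\ref{t:phantom-ck-algebras}.
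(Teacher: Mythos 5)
Your proof is correct, but it departs from the paper's argument at three points, each time substituting a more self-contained argument for an external citation. For the gauge-invariance of $I=\overline{ApA}$ in (1), the paper cites Theorem~7.3 and the proof of Theorem~5.3 of \cite{amp:nonstablekthy}; your direct argument --- $z\mapsto\gamma_z(p)$ is a norm-continuous path of projections, path-connected projections are Murray--von Neumann equivalent, and equivalent projections generate the same ideal --- is elementary and complete. Having reached $I=I_H$, the paper invokes Proposition~3.4 of \cite{bhrs:iccig} to get $I_H\otimes\mathbb K\cong C^*(E_H)\otimes\mathbb K$ and then Proposition~3.1 of \cite{as:geometric-class} to remove sources, whereas you identify the full corner $q_HAq_H$ with $C^*(E|_H)$ via Lemma~\ref{l:ck-subgraph} and apply Theorem~\ref{t:phantom-ck-algebras} directly to the sink-free graph $E|_H$; note that Lemma~\ref{l:ck-subgraph} only identifies $C^*(E|_H)$ with the subalgebra generated by $\{p_v,s_e\mid v,s_E(e)\in H\}$, so you still owe the spanning argument showing this subalgebra equals the corner $q_HAq_H$ --- but that is exactly the computation carried out in the proof of Theorem~\ref{t:fullcorners-stablized} and transfers verbatim, so this is a presentational omission rather than a gap. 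For (2), the paper uses Theorem~\ref{amp:nonstablekthy} to replace $p$ by $\sum_{v\in S}m_vp_v$ and compares ideals via Theorem~5.3 of \cite{amp:nonstablekthy}, while you perturb $p$ into $\mathsf M_n(A)=q_n(A\otimes\mathbb K)q_n$ by functional calculus and reduce to part (1) applied to $\mathsf M_n(A)$, which is a Cuntz--Krieger algebra by Corollary~\ref{cor:matrices} (proved before Corollary~\ref{cor}, so there is no circularity). Your route trades the nonstable $K$-theory of Ara--Moreno--Pardo for standard perturbation facts about projections; both work, and yours arguably makes the corollary less dependent on the literature.
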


\begin{proof}
We first prove (1) in the case when $p$ is a norm-full projection.  Suppose $p$ is a norm-full projection.  By Corollary~2.6 of \cite{lb-her-algs}, $pAp \otimes \mathbb{K} \cong A \otimes \mathbb{K}$.  Therefore, $pAp$ is a unital $C^{*}$-algebra that is stably isomorphic to a Cuntz-Krieger algebra.  By Theorem~\ref{t:morita-ck-algebras}, $p A p$ is isomorphic to a Cuntz-Krieger algebra.

We now prove the general case in (1).  Let $A = C^{*} (E)$ where $E$ is a finite graph with no sinks and no sources.  Let $p$ be a nonzero projection of $A$.  Set 
\begin{align*}
I = \text{ the ideal in $C^{*} (E)$ generated by $p$}.
\end{align*}  
Note that $p A p \subseteq I$ which implies that $p A p \subseteq p I p$.  Since $p I p \subseteq p A p$, we have that $p A p = p I p$.  Thus, $p I p$ is a norm-full hereditary subalgebra of $I$.  By Corollary~2.6 of \cite{lb-her-algs}, $p I p \otimes \mathbb{K} \cong I \otimes \mathbb{K}$.

Since $I$ is generated by a projection $p$, by Theorem~7.3 and the proof of Theorem~5.3 of \cite{amp:nonstablekthy}, $I$ is a gauge-invariant ideal of $C^{*} (E)$.  Thus, by Theorem~3.7 of \cite{bhrs:iccig}, there exists a hereditary saturated subset $H$ of $E^{0}$ such that $I_H$ is $I$, see Definition~\ref{d:hereditary}.

 Let $E_{H} = ( H , s_{E}^{-1} ( H ), r_{E} , s_{E} )$.  By Proposition~3.4 of \cite{bhrs:iccig}, $I_{H} \otimes \mathbb{K} \cong C^{*} ( E_{H} ) \otimes \mathbb{K}$.  Note that $E_{H}$ is a finite graph with no sinks.  By Proposition~3.1 of \cite{as:geometric-class}, we may continue to remove the sources to obtain a finite graph $F$ with no sinks and no sources such that $C^{*} ( E_{H} ) \otimes \mathbb{K} \cong C^{*} (F ) \otimes \mathbb{K}$.  Hence, $C^{*} (F)$ is a Cuntz-Krieger algebra and $p A p = p I p$ is a unital $C^{*}$-algebra that is stably isomorphic to $C^{*} (F)$.  By Theorem~\ref{t:morita-ck-algebras}, $p A p$ is isomorphic to a Cuntz-Krieger algebra.  
 
We now prove (2).  Let $p$ be a nonzero projection in $A \otimes \mathbb{K}$.  Recall that $A = C^{*} (E)$, where $E$ is a finite graph with no sinks and no sources.  By Theorem~\ref{amp:nonstablekthy}, there exists a non-empty subset $S$ of $E^{0}$ and a collection of positive integers $\{ m_{v} \}_{v \in S}$ such that $p$ is Murray-von Neumann equivalent to $\sum_{ v \in S } m_{v} p_{v}$.  Set $q = \sum_{ v \in S } p_{v}$.  Then $q$ is a nonzero projection in $A$ and by (1), we have that $q A q \cong C^{*} (F)$ for some finite graph $F$ with no sinks and no sources.  By Theorem~5.3 of \cite{amp:nonstablekthy}, $p$ and $q \otimes e_{11}$ generate the same ideal of $A \otimes \mathbb{K}$.  Hence, $q A q \otimes \mathbb{K} \cong ( q \otimes e_{11} ) A \otimes \mathbb{K} ( q \otimes e_{11} ) \cong p ( A \otimes \mathbb{K} ) p \otimes \mathbb{K}$.  Therefore, $p ( A \otimes \mathbb{K} ) p$ is stably isomorphic to a Cuntz-Krieger algebra.  By Theorem~\ref{t:morita-ck-algebras}, $p ( A \otimes \mathbb{K} ) p$ is isomorphic to a Cuntz-Krieger algebra.
\end{proof}

\begin{corollary} \label{c:sem}
Let $A$ be a Cuntz-Krieger algebra.  If $p$ is a projection in $A \otimes \mathbb{K}$, then $p( A \otimes \mathbb{K} ) p$ is semiprojective.  If $p$ is a projection in $A$, then $p A p$ is semiprojective.
\end{corollary}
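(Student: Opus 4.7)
The approach is a one-line reduction to Corollary~\ref{cor}. By Corollary~\ref{cor}(1), the corner $pAp$ is isomorphic to a Cuntz-Krieger algebra, and by Corollary~\ref{cor}(2), the corner $p(A\otimes\mathbb{K})p$ is likewise isomorphic to a Cuntz-Krieger algebra. Since semiprojectivity is an isomorphism invariant, both conclusions follow once one knows that every Cuntz-Krieger algebra is semiprojective. This last fact is classical: it is due to Blackadar \cite{blackadar-semiprojective}, and, as noted in the introduction, is especially transparent in our setting because a Cuntz-Krieger algebra is the graph $C^{*}$-algebra of a \emph{finite} graph, so the universal relations defining the algebra are given by finitely many generators and finitely many relations.

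There is no real obstacle here; the substantive content has already been carried out in Corollary~\ref{cor}, which established the key permanence property that corners of Cuntz-Krieger algebras (whether taken inside $A$ or inside $A\otimes\mathbb{K}$) remain Cuntz-Krieger algebras. Once this is in hand, the two statements of the corollary are parallel: one applies part~(1) of Corollary~\ref{cor} for the assertion about $pAp$ and part~(2) for the assertion about $p(A\otimes\mathbb{K})p$, and then cites semiprojectivity of Cuntz-Krieger algebras. I would write the proof as two short sentences rather than elaborate further, since any additional detail would merely restate Corollary~\ref{cor} and the known semiprojectivity result.
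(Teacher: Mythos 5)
Your proof is correct and is essentially identical to the paper's: the authors likewise deduce the result directly from Corollary~\ref{cor} together with Blackadar's semiprojectivity of Cuntz-Krieger algebras (they cite Corollaries~2.24 and~2.29 of~\cite{blackadar}). The only cosmetic difference is that Corollary~\ref{cor} is stated for \emph{nonzero} projections, so strictly speaking the degenerate case $p=0$ should be dismissed separately, but this is trivial.
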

\begin{proof}
This follows from Corollary~\ref{cor} since by Corollary~2.24 of~\cite{blackadar} all Cuntz-Krieger algebras are semiprojective and by Corollary~2.29 of~\cite{blackadar} all stabilized Cuntz-Krieger algebras are semiprojective.
\end{proof}

\section{Acknowledgements}
The authors are grateful to George A.~Elliott for asking such inspiring questions.
The authors also wish to thank S{\o}ren Eilers, Adam S{\o}rensen, and Mark Tomforde for helpful conversations that have led to the improvement of our results.  The second named author is grateful to S{\o}ren Eilers and the Department of Mathematical Sciences at the University of Copenhagen for providing the dynamic research environment where this work was initiated during the Spring of 2012.

This research was supported by the Danish National Research Foundation through the Centre for Symmetry and Deformation (DNRF92) at University of Copenhagen, and by the NordForsk research network ``Operator Algebras and Dynamics'' (grant \#11580).

\def\cprime{$'$}

\end{document}